\documentclass[sn-basic]{sn-jnl}
\usepackage{graphicx}%
\usepackage{amsmath,amssymb,amsfonts}%
\usepackage{amsthm}%
\usepackage{bbm}
\usepackage{mathrsfs}%
\usepackage{xcolor}%
\usepackage{textcomp}%
\usepackage{manyfoot}%
\usepackage{etoolbox}

\theoremstyle{thmstyleone}%
\newtheorem{theorem}{Theorem}[section]%
\newtheorem{lemma}{Lemma}[section]%
\setcitestyle{citesep={,}}
\makeatletter
\patchcmd{\@maketitle}{Contributing authors:\ }{E-mails:\ }{}{}
\def\email#1{\global\advance\emailcnt by 1\relax%
  \ifnum\emailcnt>1\g@addto@macro\authemail{;\ }\fi%
  \g@addto@macro\authemail{\textcolor{blue}{#1}}}
\makeatother

\begin{document}
\title[Asymptotic Minimax Estimation under Global--Local Priors]{Asymptotic Minimax Estimation under Global--Local Priors}

\author{\fnm{Rhitankar} \sur{Bandyopadhyay}}\email{r.bandyopadhyay@ufl.edu}
\author{\fnm{Malay} \sur{Ghosh}}\email{ghoshm@ufl.edu}
\affil{\orgdiv{Department of Statistics}, \orgname{University of Florida}, \orgaddress{\city{Gainesville}, \state{FL} \postcode{32611}, \country{USA}}}

\abstract{Global-local priors, often also referred to as shrinkage priors, have proved to be a very effective tool for the analysis of high dimensional data under sparsity. Asymptotic theoretical properties of such priors, studied under various scenarios, are now available in the literature. However, to our knowledge, theoretical guarantees of such priors provided so far, involve the assumption of known sample variance. The present paper relaxes this assumption,
and carries out the analysis with a prior assigned to the error variance as well. In the process, some new tail bounds for shrinkage factors are developed, and these results are then utilized in providing asymptotic minimax rates for the posterior means of the parameters of interest.}

\keywords{Global–local priors, Shrinkage priors, Exponential, Inverse Gaussian, Horseshoe prior, Sparsity, Asymptotic minimaxity, Tail bounds}

\maketitle

\nocite{castillo2012}

\section{Introduction}

\noindent
Global-local shrinkage priors, introduced by \citet{carvalho2009, carvalho2010} and followed subsequently in a multitude of other papers, have now
become quite popular for simultaneous estimation and multiple testing of normal means under sparsity. Applications abound, for example, in microarray  experiments, image reconstruction, small area estimation and also in a vast
majority of other problems in science, engineering, finance, sociology, just to name a few. The success of such priors does not rest just on identifying many zeroes but finding as well a few signals in the midst of multiple noises. This is one of the most fundamental issues in many scientific
disciplines, high energy physics being one prime example.\\

\noindent
The existing literature has not just been concentrated in the estimation and testing of normal means under sparsity but also for variable selection in high dimensional regression models where the number of regressors far outnumber the sample size, but only a few have a real impact on the 
predictand in a given situation. An example is Type I diabetes, where out of millions of genes, only a handful has been recognized, caausing the disease.\\

\noindent
Global-local priors assigned to normal means for either estimation or testing under sparsity are hierarchical. The first stage involves normal distributions with zero means, while the variances consist of two components. One global
component, intended to shrink all the means towards zeroes are very effective for handling sparsity. The others, the local components, are distinct for each individual mean and are assigned heavy tailed priors in the next stage, thereby preventing genuine signals being shrunk towards zeroes.\\

\noindent
There are several excellent articles with the above framework for both estimation and testing of normal means. Among others, mention may be made of \citet{vanderpas2014}, \citet{vanderpas2016} and \citet{ghosh2017}. A multivariate
extension of the work of \cite{ghosh2017} appears in \citet{qin2024}.\\

\noindent
The present article also begins with the same framework of the above authors but with an added feature. To our knowledge, the previous work typically assumes known sample variances for any analytical study. In this aricle, we assign instead prior distributions to the same and carry out the
corresponding analysis. We concentrate on estimation of normal means, develop some new concentration inequalities and prove asymptotic minimax rates of posterior means as estimators of population means. \\

\noindent
We consider two types of priors for the error variance. In Section~\ref{sec:results-exponential} of this paper, we assign an exponential prior and provide the two main results dealing with asymptotic minimaxity. In Section~\ref{sec:results-inversegamma}, we use the Inverse Gamma prior and provide the corresponding minimaxity results. Lemmas and proofs related to the results of Section~\ref{sec:results-exponential} and Section~\ref{sec:results-inversegamma} are given in Section~\ref{sec:proofs-exponential} and Section~\ref{sec:proofs-inversegamma} respectively. Some final remarks are made in Section~\ref{sec:remarks}.\\

\section{The Results: Exponential prior}\label{sec:results-exponential}

\noindent
Following the lines of \citet{ghosh2017}, we begin with the model\\
(i) $X_i|\theta_i,\sigma_i^2\stackrel{\mbox{ind}}
{\sim}\mbox{N}(\theta_i,\sigma_i^2)$;
$i=1.\ldots,n$,\\
(ii) $\theta_i|\sigma_i^2,\lambda_i^2,\tau^2\stackrel{\mbox{ind}}{\sim}
\mbox{N}(0,\sigma_i^2\lambda_i^2\tau^2)$; $i=1.\ldots,n$,\\
(iii) $\lambda_1^2,\ldots,\lambda_n^2,\sigma_1^2,\ldots,\sigma_n^2$ are mutually
independent with $\Pi(\lambda_i^2)=K(\lambda_i^2)^{-a-1}L(\lambda_i^2)$,
$a>0$ and $L$ is a slowly varying function,\\
(iv) an exponential prior $\Pi(\sigma_i^2)=(1/2)\exp(-\sigma_i^2/2)$ on the error variance.\\

\noindent
As in \citet{ghosh2017}, we take $\tau\equiv\tau_n$ as the tuning parameter. The new feature in this model is introduction of a prior on the $\sigma_i^2$
instead of treating them as constants. The TPBN prior (see e.g. \citet{ghosh2016}) is given by $\Pi(\lambda_i^2)\propto(\lambda_i^2)^{b-1}  
(1+\lambda_i^2)^{-a-b}=(\lambda_i^2)^{-a-1}(1+\lambda_i^{-2})^{-a-b}$ is of 
the form (iii) with $L(\lambda_i^2)=(1+\lambda_i^{-2})^{-a-b}$, a bounded slowly varying function. The special case $a=b=1/2$ yields the now celebrated 
horseshoe prior. \\

\noindent
We need the following assumption on the slowly varying $L$.\\

\noindent
$L$ is nondecreasing in its argument with
\begin{equation} 
0<L(u)\leq M<\infty.
\label{eq:1}
\end{equation}
Assuming squared error loss, the Bayes estimator of $\theta_i$ is given by
\begin{equation}
\hat{\theta}_i=\mathbb{E}(1-\kappa_i|X_i)X_i,
\label{eq:2}
\end{equation}
where $\kappa_i=(1+\lambda_i^2\tau^2)^{-1}$.
We are interested in finding the asymptotic minimax  rate of the estimator 
given in (\ref{eq:2}).
To this end, we assume that the true means $\theta_{0i}$ are sparse in the 
nearly black sense, namely,
\begin{equation}
(\theta_{01},\ldots,\theta_{0n})\in L_0(q_n)=
\{\theta_{01},\ldots,\theta_{0n}:\sum_{i=1}^n I_{[\theta_{0i}\neq 0]}\leq q_n\}.
\label{eq:3}
\end{equation}
The first main result of this section provides asymptotic minimax rate of
the Bayes estimators given in (\ref{eq:2}).\\

\noindent
\begin{theorem}\label{thm:exp1}
Under the assumption given in (\ref{eq:3}), with further assumptions (i)  
$\mbox{max}_{1\leq i\leq n}\sigma_{0i}^2=O(1)$ and
(ii) $q_n=o(n)$, choosing $\tau_n=(q_n/n)^{1+\epsilon_n}\log(n/q_n)$, where
$\epsilon_n\rightarrow 0$ as $n\rightarrow\infty$,
\begin{equation}
\mbox{lim}_{n\rightarrow\infty}
\mbox{sup}_{(\theta_{01},\ldots,\theta_{0n})\in L_0(q_n)}
\frac{\sum_{i=1}^n \mathbb{E}_{i}(\hat{\theta}_i-\theta_{0i})^2}
{q_n\log(n/q_n)}\leq C,
\label{eq:theorem1}
\end{equation}
where $C(>0)$ is a constant depending on $\sigma_{0i}^2$.
\end{theorem}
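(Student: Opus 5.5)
Following the route of \citet{ghosh2017} and \citet{vanderpas2014}, we first split the risk into the contributions from the nonzero and the zero coordinates. Write $\tilde q_n=\#\{i:\theta_{0i}\neq 0\}\le q_n$. Then
\begin{equation*}
\sum_{i=1}^n \mathbb{E}_i(\hat\theta_i-\theta_{0i})^2=\sum_{i:\theta_{0i}\neq0}\mathbb{E}_i(\hat\theta_i-\theta_{0i})^2+\sum_{i:\theta_{0i}=0}\mathbb{E}_i\hat\theta_i^2 .
\end{equation*}
The goal is to bound each nonzero term by $C\log(n/q_n)$, uniformly in $\theta_{0i}$, and the total of the zero terms by $C\,q_n\log(n/q_n)$. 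Here $X_i\sim N(\theta_{0i},\sigma_{0i}^2)$ with $\max_i\sigma_{0i}^2=O(1)$.

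The heart of the argument consists of two tail bounds on the posterior shrinkage factor $\kappa_i$ after $\sigma_i^2$ has been integrated out against its exponential prior. Given $\lambda_i^2$ and $\sigma_i^2$, the marginal of $X_i$ is $N(0,\sigma_i^2/\kappa_i)$. Integrating $\sigma_i^2$ against $\tfrac12 e^{-\sigma_i^2/2}$ uses the identity $\int_0^\infty (s)^{-1/2}e^{-x^2\kappa/(2s)-s/2}\,ds\propto e^{-|x|\sqrt{\kappa}}$. So, up to constants, the posterior density of $\kappa_i$ is proportional to
\begin{equation*}
\kappa_i^{a-1}(1-\kappa_i)^{-a-1}\,\tau^{2a}\,L\!\left(\tfrac{1}{\tau^2}\bigl(\tfrac1{\kappa_i}-1\bigr)\right)e^{-|X_i|\sqrt{\kappa_i}}.
\end{equation*}
This is a Laplace-type rather than Gaussian likelihood, and it is the new feature relative to \citet{ghosh2017}. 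Using (\ref{eq:1}) and this representation, I would prove two bounds.

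\textbf{(A)} $\mathbb{E}(\kappa_i\mid X_i)\le C\,\tau^{2a}e^{|X_i|}$ (or a similar bound), to be used for small $|X_i|$.

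\textbf{(B)} For any $\eta\in(0,1)$, $P(\kappa_i>\eta\mid X_i)\le C(\eta)\,\tau^{-2a}\exp(-|X_i|(\sqrt\eta-\sqrt{\eta\delta}))$ for $\delta\in(0,1)$. This is obtained by bounding the numerator of the ratio above on $\kappa>\eta$ and the denominator from below on $\kappa<\eta\delta$.

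Consequently, $\mathbb{E}(\kappa_i\mid X_i)\le \eta+o(1)$ once $|X_i|\ge c\log(1/\tau_n)$. Because the likelihood is Laplace, the natural threshold is $\zeta_n\asymp\log(1/\tau_n)\asymp\log(n/q_n)$, not $\sqrt{2\log(1/\tau_n)}$.

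\textbf{Nonzero means.} Use $\hat\theta_i-\theta_{0i}=(X_i-\theta_{0i})-\mathbb{E}(\kappa_i\mid X_i)X_i$ and the fact that $|\mathbb{E}(\kappa_i\mid X_i)X_i|\le|X_i|$. On $\{|X_i|\le\zeta_n\}$ the squared error is $O(1+\zeta_n^2)$. On $\{|X_i|>\zeta_n\}$, bound (B) gives $\mathbb{E}(\kappa_i\mid X_i)|X_i|=O(1)$ up to a vanishing correction. This step is delicate: with threshold $\zeta_n\asymp\log(n/q_n)$, a crude bound would give $\zeta_n^2=\log^2(n/q_n)$, too large by a log factor. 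To avoid this, I would refine by comparing the posterior mean with a soft-threshold-type rule. For the Laplace marginal, $\mathbb{E}(\kappa\mid X)|X|$ should be bounded by $\min(|X|,c\log(1/\tau))$ with a constant multiple that, after optimizing over $\theta_{0i}$, yields $O(\log(n/q_n))$ risk. Alternatively, one can exploit the $\log(n/q_n)$ factor built into $\tau_n$ together with $\epsilon_n\to0$ to balance the terms. \textbf{I expect this to be the main obstacle}, and I would first try to get the per-coordinate bound $C\log(n/q_n)$ by choosing $\eta,\delta$ depending on $n$.

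\textbf{Zero means.} Here $X_i\sim N(0,\sigma_{0i}^2)$ and $\hat\theta_i^2\le(1-\mathbb{E}(\kappa_i\mid X_i))^2X_i^2$. Split at $|X_i|=\zeta_n$. Below the threshold, bound (A) applied to $1-\kappa_i$ gives $\mathbb{E}(1-\kappa_i\mid X_i)\lesssim\tau_n^{2a}e^{|X_i|}$. Integrating against the Gaussian density produces a term of order $\tau_n^{\min(2a,1)}\log(n/q_n)$-ish per coordinate. Above the threshold, the Gaussian tail $P(|X_i|>\zeta_n)\lesssim e^{-\zeta_n^2/(2\sigma_{0i}^2)}$ is superpolynomially small in $n/q_n$. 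Summing over at most $n$ zero coordinates and using $\tau_n=(q_n/n)^{1+\epsilon_n}\log(n/q_n)$, the total is $o(q_n\log(n/q_n))$, possibly requiring $2a\ge1$ or absorbing the constant into $C$. Combining both parts and taking the supremum over $L_0(q_n)$ gives (\ref{eq:theorem1}), with $C$ depending on $\max\sigma_{0i}^2$.
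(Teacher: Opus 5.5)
You have found the real obstacle, but it cannot be removed. The per-coordinate bound $C\log(n/q_n)$ for nonzero means is false, so the nonzero step of your plan cannot be completed and none of your proposed fixes can work.

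\textbf{Why the nonzero step fails.} Your threshold analysis is right, once the posterior is corrected. The Laplace marginal is $\tfrac12\sqrt{\kappa}\,e^{-\sqrt{\kappa}|x|}$, so the posterior density carries $\kappa^{a-1/2}$, not $\kappa^{a-1}$. The posterior of $\kappa_i$ leaves the vicinity of $1$ only once $e^{-|X_i|}$ is beaten by $\tau^{2a}|X_i|^{-2a-1}$, i.e.\ at $|X_i|\approx 2a\log(1/\tau_n)$. This threshold is a property of $\hat\theta_i$ itself, not of the proof, so:
\begin{itemize}
\item letting your $\eta,\delta$ depend on $n$ cannot move it;
\item the extra $\log(n/q_n)$ in $\tau_n$ shifts $\log(1/\tau_n)$ only by $\log\log(n/q_n)$;
\item a bias bound of the form $\min(|X_i|,c\log(1/\tau_n))$ still allows squared bias $c^2\log^2(n/q_n)$.
\end{itemize}
Here is an explicit counterexample. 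Lemma~\ref{lem:exp1} (your (A)) gives $\mathbb{E}(1-\kappa_i\mid X_i)\le C\tau^{2\gamma}e^{|X_i|}$ for a fixed $0<\gamma<\min(a,1/2)$ (the paper's $\eta$). Take $\sigma_{0i}=1$ and $\theta_{0i}=\mu_n:=\tfrac{\gamma}{2}\log(1/\tau_n)$ on $q_n$ coordinates. On $\{|X_i-\mu_n|\le\mu_n/2\}$, whose probability tends to one, $|\hat\theta_i|\le C\tau_n^{5\gamma/4}\log(1/\tau_n)\to0$. Hence
\[
\sum_{i:\theta_{0i}\neq0}\mathbb{E}_i(\hat\theta_i-\theta_{0i})^2\ \ge\ (1-o(1))\,q_n\mu_n^2\ \sim\ \frac{\gamma^2}{4}\,q_n\log^2(n/q_n),
\]
so the ratio in the theorem tends to infinity. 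In the worst case over $L_0(q_n)$, the nonzero coordinates alone contribute order $q_n\log^2(n/q_n)$. That matches the upper bound your crude threshold argument already gives.

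\textbf{How the paper's proof gets past this point.} It does not have an idea that avoids the obstacle; it passes it through an invalid step. In \eqref{eq:t1-exp-6} it takes $b_n\asymp[\log(n/q_n)]^{1/2}$. On $\{|X_i|>b_n\}$, however, Lemma~\ref{lem:exp2} only yields $C\tau_n^{-2a}b_n^2\exp\{-u^{1/2}(1-\delta^{1/2})b_n\}$. This diverges, because $\tau_n^{-2a}$ is polynomial in $n/q_n$ while $e^{-cb_n}$ is only $e^{-c'\sqrt{\log(n/q_n)}}$. The inequality asserted there in fact fails for the $\mu_n$ example above. In effect, the Gaussian-likelihood threshold $\sqrt{2\log(1/\tau)}$ has been carried over to a Laplace-likelihood posterior, and that is precisely the log factor you could not recover.

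The paper's zero-mean step also uses $\tau_n^{2\eta}\le\tau_n$, which is backwards for $\tau_n<1$ and $2\eta<1$. Your caveat that something like $a\ge1/2$ is needed there is the correct version. For $a\ge 1/2$ one has $\mathbb{E}(1-\kappa_i\mid X_i)\le C\tau e^{|X_i|}$, so the zero part is at most $C n\tau_n=Cq_n(q_n/n)^{\epsilon_n}\log(n/q_n)$, which is $O(q_n\log(n/q_n))$ when $\epsilon_n\ge0$.

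\textbf{What your outline can prove.} Suppose additionally $\max_i|\theta_{0i}|=O(\sqrt{\log(n/q_n)})$; this includes the hypothesis $\max_i|\theta_{0i}|=O(1)$ of Theorem~\ref{thm:exp2}. Then each nonzero term is $O(\log(n/q_n))$ directly from $|\hat\theta_i|\le|X_i|$. Combined with your zero-mean bound, this gives the stated rate. Without such a restriction, the correct worst-case order is $q_n\log^2(n/q_n)$.
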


\noindent
The second theorem of this section provides contraction rates of the 
posterior means $\hat{\theta}_i$. It says that the posterior distributions
of the parameters center around the corresponding Bayes estimators at least
as fast as the minimax rate.\\

\noindent
\begin{theorem}\label{thm:exp2}
Under the assumptions of Theorem~\ref{thm:exp1} and the additional assumption that
$\mbox{max}_{1\leq i\leq n}|\theta_{0i}|=O(1)$,
\begin{equation}
\mbox{lim}_{n\rightarrow\infty}
\mbox{sup}_{(\theta_{01},\ldots,\theta_{0n})\in L_0(q_n)}
\sum_{i=1}^n \mathbb{P}_{0i}[(\hat{\theta}_i-\theta_{0i})^2>M_n{q_n\log(n/q_n)]}=0.
\label{eq:theorem2}
\end{equation}
\end{theorem}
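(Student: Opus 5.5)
The plan is to deduce Theorem~\ref{thm:exp2} from Theorem~\ref{thm:exp1} through Markov's inequality applied coordinatewise. The control is then sharpened on the zero coordinates, where a sum of $n$ probabilities must vanish. For each $i$, Markov's inequality gives
\[
\mathbb{P}_{0i}\bigl[(\hat{\theta}_i-\theta_{0i})^2>M_nq_n\log(n/q_n)\bigr]\le \frac{\mathbb{E}_i(\hat{\theta}_i-\theta_{0i})^2}{M_nq_n\log(n/q_n)} .
\]
Summing over $i$ and invoking (\ref{eq:theorem1}) bounds the left side of (\ref{eq:theorem2}) by $C/M_n$, uniformly over $L_0(q_n)$. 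This tends to zero for any $M_n\to\infty$. The uniformity in the supremum comes directly from Theorem~\ref{thm:exp1}, whose bound is uniform over the nearly black class.

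If one reads the event with a fixed constant $M_n\equiv M$, the crude bound above is insufficient, and I would split the indices into the nonzero set $\{i:\theta_{0i}\ne0\}$ (at most $q_n$ terms) and the zero set. On the nonzero set, the extra assumption $\max_i|\theta_{0i}|=O(1)$ helps. Together with $|\hat\theta_i|\le|X_i|$ (since $0\le 1-\kappa_i\le1$) and $\max_i\sigma_{0i}^2=O(1)$, it makes $(\hat\theta_i-\theta_{0i})^2\le 2X_i^2+2\theta_{0i}^2$. This quantity has Gaussian tails with bounded scale, so each probability is at most $\exp(-c\,M q_n\log(n/q_n))$. Summing $q_n$ such terms gives $q_n (q_n/n)^{cMq_n}\to0$ because $q_n=o(n)$.

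On the zero set, $X_i\sim N(0,\sigma_{0i}^2)$, and I would use $|\hat\theta_i|\le |X_i|$ together with the tail bound on the posterior shrinkage factor developed for Theorem~\ref{thm:exp1}. That bound, after integrating out the exponential prior on $\sigma_i^2$, takes the form $\mathbb{E}(1-\kappa_i|X_i)\lesssim \tau_n^{2a}e^{X_i^2/2}$-type (up to constants and slowly varying factors). On the event $|X_i|\le\sqrt{c\log(n/q_n)}$, the choice $\tau_n=(q_n/n)^{1+\epsilon_n}\log(n/q_n)$ makes $\hat\theta_i^2$ of much smaller order than $q_n\log(n/q_n)$, so those coordinates contribute no exceedance. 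The complementary event has probability at most $(q_n/n)^{c/(2\max\sigma_{0i}^2)}$ per coordinate. A suitable $c$, chosen depending on the $O(1)$ bound on $\sigma_{0i}^2$, makes $n$ times this $o(1)$; in fact one may even take the threshold $\sqrt{M q_n\log(n/q_n)}$ directly for $|X_i|$, giving $n\,e^{-Mq_n\log(n/q_n)/(2\sigma^2_{\max})}\to0$ as soon as $q_n\log(n/q_n)\gtrsim \log n$.

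The main obstacle is the zero-coordinate sum, since $n$ terms must be controlled simultaneously, and in the regime where $q_n$ is bounded, $q_n\log(n/q_n)\asymp\log n$. There the Gaussian tail bound $n\exp(-Mq_n\log(n/q_n)/(2\sigma_{\max}^2))$ vanishes only if $M$ exceeds $2\sigma_{\max}^2$ (times the appropriate factor). I would first try to absorb this by letting $M_n\to\infty$ as the statement's notation suggests; if that is not permitted, the sharper shrinkage-factor tail bound would be used to handle the moderate-$|X_i|$ range instead of the trivial bound $|\hat\theta_i|\le|X_i|$.
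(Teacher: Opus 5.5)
Your main argument is correct for the statement as printed, but it does not follow the paper's route. You apply Markov's inequality to the frequentist risk and invoke Theorem~\ref{thm:exp1}, so the sum is eventually at most $(C+1)/M_n$. This is a one-line corollary, and it never uses the extra hypothesis $\max_i|\theta_{0i}|=O(1)$. It does, however, genuinely need $M_n\to\infty$, because Theorem~\ref{thm:exp1} gives only the rate $q_n\log(n/q_n)$, not $o(q_n\log(n/q_n))$.

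The paper instead bounds $\mathbb{E}[(\hat\theta_i-\theta_i)^2\mid X_i]=\mathbb{E}[\sigma_i^2(1-\kappa_i)\mid X_i]$, where $\theta_i$ is the posterior draw. This matches its prose that the posterior centers around $\hat\theta_i$. For signals it uses the inverse-Gaussian reciprocal moment $\mathbb{E}[\sigma_i^2\mid\kappa_i,X_i]\le|X_i|+1$ together with $\mathbb{E}_i|X_i|\le(\theta_{0i}^2+\sigma_{0i}^2)^{1/2}$. This is exactly where bounded $\theta_{0i}$ enters, and it gives $O(1)$ per coordinate. For nulls it reuses Lemma~\ref{lem:exp1} to get $O(\tau_n)$. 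Markov then yields $C[q_n+(n-q_n)(q_n/n)^{1+\epsilon_n}]/(q_n\log(n/q_n))\to0$, so no divergence of $M_n$ is needed there.

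The two arguments therefore control different objects. Yours controls the distance of the Bayes estimator from the truth; the paper's controls the posterior spread around the Bayes estimator. Posterior contraction around $\theta_0$ would follow by combining them via $(\theta_i-\theta_{0i})^2\le2(\theta_i-\hat\theta_i)^2+2(\hat\theta_i-\theta_{0i})^2$.

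One caveat concerns your fixed-$M$ fallback. The bound you quote, $\tau_n^{2a}e^{X_i^2/2}$, is not what the exponential prior gives. Integrating out $\sigma_i^2$ produces the Laplace-type factor $\exp\{-|X_i|(1+\lambda_i^2\tau^2)^{-1/2}\}$, and hence Lemma~\ref{lem:exp1}: $\mathbb{E}(1-\kappa_i\mid X_i)\le C\tau^{2\eta}e^{|X_i|}$. It is this linear exponent that closes the bounded-$q_n$ null case. Let $\sigma_{\max}^2=\max_i\sigma_{0i}^2$. The complements of the events $\{|X_i|\le2\sigma_{\max}\sqrt{\log n}\}$ have total probability $O(1/n)$, and on these events $e^{|X_i|}=n^{o(1)}$ while $\tau_n^{2\eta}$ is polynomially small. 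With your $e^{X_i^2/2}$ factor, the bound is only of order $n^{2\sigma_{\max}^2-2a+o(1)}$, which need not vanish. So that branch does not close as written, though it is moot once $M_n\to\infty$.
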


\section{The Results: Inverse Gamma prior}\label{sec:results-inversegamma}

\noindent
Following the lines of \citet{ghosh2017} again, we begin with the model\\
(i) $X_i|\theta_i,\sigma_i^2\stackrel{\mbox{ind}}
{\sim}\mbox{N}(\theta_i,\sigma_i^2)$;
$i=1.\ldots,n,$\\
(ii) $\theta_i|\sigma_i^2,\lambda_i^2,\tau^2\stackrel{\mbox{ind}}{\sim}
\mbox{N}(0,\sigma_i^2\lambda_i^2\tau^2)$; $i=1.\ldots,n$,\\
(iii) $\lambda_1^2,\ldots,\lambda_n^2,\sigma_1^2,\ldots,\sigma_n^2$ are mutually
independent with $\Pi(\lambda_i^2)=K(\lambda_i^2)^{-a-1}L(\lambda_i^2)$,
$a>0$ and $L$ is a slowly varying function,\\
(iv) an inverse gamma prior $\Pi(\sigma_i^2)=(\beta^\alpha/\Gamma(\alpha)) (\sigma_i)^{-\alpha-1} \exp(-\beta/\sigma_i^2)$ on the error variance.\\ 

\noindent
Equations~(\ref{eq:1})-(\ref{eq:3}) hold in similar fashion as described in Section~\ref{sec:results-exponential}. The first main result of this section provides asymptotic minimax rate of
the Bayes estimators given in (\ref{eq:2}).\\

\noindent
\begin{theorem}\label{thm:ig1}
Under the assumption given in (\ref{eq:3}), with further assumptions (i)  
$\mbox{max}_{1\leq i\leq n}\sigma_{0i}^2=O(1)$ and
(ii) $q_n=o(n)$, choosing $\tau_n=(q_n/n)^{\frac{1}{2\eta}(1+\epsilon_n)}$, where
$\epsilon_n\rightarrow 0$ as $n\rightarrow\infty$,
\begin{equation}
\mbox{lim}_{n\rightarrow\infty}
\mbox{sup}_{(\theta_{01},\ldots,\theta_{0n})\in L_0(q_n)}
\frac{\sum_{i=1}^n \mathbb{E}_{i}(\hat{\theta}_i-\theta_{0i})^2}
{q_n\log(n/q_n)}\leq C,
\label{eq:theorem1}
\end{equation}
where $C(>0)$ is a constant depending on $\sigma_{0i}^2$ and $a+1<\alpha<a+2$.
\end{theorem}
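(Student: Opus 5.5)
We follow the template of the proof of Theorem~\ref{thm:exp1}, replacing the exponential-prior computations by their inverse-gamma analogues. Write $\hat\theta_i-\theta_{0i}=(\hat\theta_i-X_i)+(X_i-\theta_{0i})$ and use $\hat\theta_i-X_i=-\mathbb{E}(\kappa_i|X_i)X_i$. The sum over $i$ splits into the at most $q_n$ nonzero coordinates and the $n-q_n$ null coordinates. For a nonzero coordinate we bound
\[
\mathbb{E}_{i}(\hat\theta_i-\theta_{0i})^2\le 2\sigma_{0i}^2+2\,\mathbb{E}_i\bigl[\mathbb{E}(\kappa_i|X_i)X_i\bigr]^2 .
\]
For a null coordinate we bound $\mathbb{E}_i\hat\theta_i^2=\mathbb{E}_i[\mathbb{E}(1-\kappa_i|X_i)^2X_i^2]$. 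The target is $O(\log(n/q_n))$ per signal and $O(q_n\log(n/q_n)/n)$ per null coordinate. Both targets must hold uniformly, using $\max_i\sigma_{0i}^2=O(1)$.

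The key ingredient is a pair of tail bounds for the shrinkage factor under the inverse gamma prior. We first integrate $\sigma_i^2$ out of the joint posterior of $(\lambda_i^2,\sigma_i^2)$. Given $\lambda_i^2$, the marginal of $X_i$ is $N(0,\sigma_i^2/\kappa_i)$, and mixing over $\sigma_i^2\sim IG(\alpha,\beta)$ gives a Student-type likelihood proportional to $\kappa_i^{1/2}(\beta+\kappa_iX_i^2/2)^{-\alpha-1/2}$. Changing variables from $\lambda_i^2$ to $\kappa_i$ then yields a posterior density for $\kappa_i$ proportional to
\[
\kappa_i^{a-1/2}(1-\kappa_i)^{-a-1}\,\tau^{2a}\,L\Bigl(\tfrac{1-\kappa_i}{\kappa_i\tau^2}\Bigr)\,\bigl(\beta+\kappa_iX_i^2/2\bigr)^{-\alpha-1/2}.
\]
From this density we prove two inequalities, in the spirit of \citet{ghosh2017}.

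\textbf{(A)} For the null case, $\mathbb{E}(1-\kappa_i|X_i)\lesssim \tau^{2\eta}$ times a polynomial factor in $X_i$, where $\eta$ is the effective exponent produced by the inverse-gamma tail. Since $\tau_n=(q_n/n)^{(1+\epsilon_n)/(2\eta)}$, we have $\tau^{2\eta}\approx q_n/n$, and presumably this is exactly how $\eta$ is meant to enter.

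\textbf{(B)} For the signal case, $\mathbb{E}(\kappa_i|X_i)\le \epsilon$ plus a term decaying in $X_i^2$, once $|X_i|$ exceeds a threshold $\zeta_n$ of order $\sqrt{\log(1/\tau^{2\eta})}\asymp\sqrt{\log(n/q_n)}$.

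Both bounds come from splitting the $\kappa$-integral at a point $\delta$, using $L\le M$ and the monotonicity of $L$ to sandwich the slowly varying factor. The condition $a+1<\alpha<a+2$ is what makes the numerator and denominator integrals near $\kappa=0$ and $\kappa=1$ comparable. In particular, the factor $(\beta+\kappa X^2/2)^{-\alpha-1/2}$ produces polynomial rather than Gaussian decay in $X$, and we need $\alpha-a>1$ so that the resulting moments of $X_i^2\mathbb{E}(\cdot|X_i)$ are finite.

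With (A) and (B), the signal coordinates are handled by splitting on $|X_i|\le\zeta_n$ versus $|X_i|>\zeta_n$. On $\{|X_i|\le\zeta_n\}$ we use $\kappa_i\le 1$ to get $(\mathbb{E}(\kappa_i|X_i)X_i)^2\le\zeta_n^2=O(\log(n/q_n))$. On $\{|X_i|>\zeta_n\}$ bound (B) gives an $O(1)$ contribution. For the null coordinates we split at $|X_i|\le\zeta_n$ versus $|X_i|>\zeta_n$. On $\{|X_i|\le\zeta_n\}$, bound (A) gives $\tau^{2\eta}\,\mathbb{E}_i[X_i^2\cdot\mathrm{poly}(X_i)]\lesssim (q_n/n)\log(n/q_n)$. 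On $\{|X_i|>\zeta_n\}$ we use $\hat\theta_i^2\le X_i^2$ and the Gaussian tail under $N(0,\sigma_{0i}^2)$. This gives $\mathbb{E}[X_i^2 1_{|X_i|>\zeta_n}]\lesssim\zeta_n\phi(\zeta_n/\sigma_{0i})$. With $\zeta_n^2=2c\log(n/q_n)$ for a large enough constant $c$ depending on $\max\sigma_{0i}^2$, this is $o(q_n/n)$. Summing over the $n-q_n$ null terms gives $O(q_n\log(n/q_n))$, and $C$ depends on $\sigma_{0i}^2$ through $c$.

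The main obstacle is bound (A) under the heavy-tailed marginal. Unlike the known-variance case, the likelihood ratio between $\kappa$ near $0$ and $\kappa$ near $1$ grows only polynomially in $X_i$. The null bound must therefore be paired with the threshold $\zeta_n$ carefully, so that the polynomial factor in $X_i$ is absorbed into $\log(n/q_n)$ rather than into a power of $n/q_n$. To handle this, we first establish the exact form of the $\kappa$-density displayed above. We then bound $\int_0^1$ of the numerator by substituting $u=\kappa X^2$ and choosing the exponent $\eta$ (tied to $\alpha-a$) so that the residual power of $\tau$ is exactly $\tau^{2\eta}$.
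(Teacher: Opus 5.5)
\textbf{Gap: bound (B) is false.} With the Student-type likelihood you derived, $\kappa_i^{1/2}(\beta+\kappa_iX_i^2/2)^{-\alpha-1/2}$, the posterior odds in favor of small $\kappa_i$ grow only polynomially in $|X_i|$. They therefore beat the prior odds $\tau^{-2a}$ only when $|X_i|$ is a power of $1/\tau$, not at $|X_i|\asymp\sqrt{\log(n/q_n)}$. Concretely, take $|X_i|\ge 1$ and substitute $s=\lambda_i^2\tau^2$ where needed. The region $\{\lambda_i^2\le 1\}$ bounds the denominator of $\mathbb{E}(1-\kappa_i|X_i)$ below by $c(\beta+X_i^2/2)^{-\alpha-1/2}$. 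In the numerator, the region $\{\lambda_i^2\tau^2\le 1\}$ contributes at most $C\tau^{2\eta}(\beta+X_i^2/2)^{-\alpha-1/2}$. Using $L\le M$ and $\alpha>a$, the region $\{\lambda_i^2\tau^2>1\}$ contributes at most $C\tau^{2a}\int_1^\infty s^{-a-3/2}\bigl(\beta+X_i^2/(2(1+s))\bigr)^{-\alpha-1/2}ds\le C\tau^{2a}|X_i|^{-2a-1}$. Hence
\[
\mathbb{E}(1-\kappa_i|X_i)\le C\bigl(\tau^{2\eta}+\tau^{2a}|X_i|^{2(\alpha-a)}\bigr),\qquad |X_i|\ge 1.
\]
This is consistent with the paper's Lemma~\ref{lem:ig2}, whose bound $C\tau^{-2a}|X_i|^{2a-2\alpha}$ is small only when $|X_i|\gg\tau^{-a/(\alpha-a)}$. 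So $\mathbb{E}(\kappa_i|X_i)\to 1$ uniformly on $\{1\le|X_i|\le\delta_n\tau_n^{-a/(\alpha-a)}\}$ for any $\delta_n\to 0$, and $\tau_n^{-a/(\alpha-a)}$ is a positive power of $n/q_n$. Your last paragraph notes the polynomial likelihood ratio, yet (B) still asserts a logarithmic threshold. The claimed $O(1)$ contribution from $\{|X_i|>\zeta_n\}$ is therefore false.

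\textbf{This cannot be repaired for the statement as written.} Put $q_n$ signals at $\theta_{0i}=A_n$ with $\sqrt{\log(n/q_n)}\ll A_n\ll\tau_n^{-a/(\alpha-a)}$ (e.g.\ $A_n=(n/q_n)^{c}$ with $c>0$ small). Then $\hat\theta_i=o_P(A_n)$, each such coordinate has risk of order at least $A_n^2$, and the normalized risk diverges. Some restriction such as $\max_i|\theta_{0i}|=O(\sqrt{\log(n/q_n)})$ is needed. Under it the signal part is trivial from $\mathbb{E}(\kappa_i|X_i)\le 1$ and $\mathbb{E}_iX_i^2=\theta_{0i}^2+\sigma_{0i}^2$. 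The paper's proof has the same hole: for $|X_i|>b_n$ it bounds $\mathbb{E}_i[|X_i|^{2a-2\alpha+2}I_{[|X_i|>b_n]}]$ by the centered Gaussian tail $\exp(-b_n^2/2\sigma_{0i}^2)$, which is valid only when $|\theta_{0i}|$ is small compared with $b_n$.

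\textbf{Null coordinates.} Here your plan is sound, and cleaner than the paper's. The correct form of Lemma~\ref{lem:ig1} carries the polynomial factor $(1+X_i^2/2\beta)^{\alpha+1/2}$. The paper pulls this factor out with exponent $-\alpha-1/2$ and then loosens it to $\exp(\alpha X_i^2/2\beta)$, which forces $\sup_i\sigma_{0i}^2<\beta/\alpha$. With a polynomial factor, no split at $\zeta_n$ is needed. However, $\eta$ is not tied to $\alpha-a$; it comes from the prior alone. Specifically, $\tau^2\int_0^\infty\lambda^2(1+\lambda^2\tau^2)^{-3/2}\Pi(\lambda^2)\,d\lambda^2\le C\tau^{2\eta}$ for $0<\eta<\min(a,1/2)$, as in Lemma~\ref{lem:exp1}.
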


\noindent
The second theorem of this section provides contraction rates of the 
posterior means $\hat{\theta}_i$. It says that the posterior distributions
of the parameters center around the corresponding Bayes estimators at least
as fast as the minimax rate.\\

\noindent
\begin{theorem}\label{thm:ig2}
Under the assumptions of Theorem~\ref{thm:ig1} and the additional assumption that
$\mbox{max}_{1\leq i\leq n}|\theta_{0i}|=O(1)$,
\begin{equation}
\mbox{lim}_{n\rightarrow\infty}
\mbox{sup}_{(\theta_{01},\ldots,\theta_{0n})\in L_0(q_n)}
\sum_{i=1}^n \mathbb{P}_{0i}[(\hat{\theta}_i-\theta_{0i})^2>M_n{q_n\log(n/q_n)]}=0.
\label{eq:theorem2}
\end{equation}
\end{theorem}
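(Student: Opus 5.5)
We would prove Theorem~\ref{thm:ig2} by reducing it to the risk bound of Theorem~\ref{thm:ig1} through Markov's inequality, in the same way as Theorem~\ref{thm:exp2} follows from Theorem~\ref{thm:exp1}. Here $M_n$ is understood to be any sequence with $M_n\to\infty$. We will also read the event in (\ref{eq:theorem2}) as the summed version $\sum_i(\hat\theta_i-\theta_{0i})^2>M_nq_n\log(n/q_n)$, whose natural coordinatewise counterpart is a union bound. The first step is Markov's inequality applied to each coordinate:
\begin{equation*}
\mathbb{P}_{0i}\bigl[(\hat\theta_i-\theta_{0i})^2>M_nq_n\log(n/q_n)\bigr]\le \frac{\mathbb{E}_{i}(\hat\theta_i-\theta_{0i})^2}{M_nq_n\log(n/q_n)}.
\end{equation*}
Summing over $i$ and taking the supremum over $L_0(q_n)$ bounds the quantity in (\ref{eq:theorem2}) by $M_n^{-1}$ times the ratio appearing in Theorem~\ref{thm:ig1}. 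By Theorem~\ref{thm:ig1}, that ratio is eventually at most $C$, uniformly over the parameter space. Hence the bound is $C/M_n\to0$.

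What remains is to check that Theorem~\ref{thm:ig1} applies uniformly under the stated hypotheses. This means the same $\tau_n=(q_n/n)^{(1+\epsilon_n)/(2\eta)}$, the condition $a+1<\alpha<a+2$, $\max_i\sigma_{0i}^2=O(1)$, and $q_n=o(n)$. The constant $C$ depends only on the bound for $\sigma_{0i}^2$, so it is uniform over $L_0(q_n)$.

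The extra hypothesis $\max_i|\theta_{0i}|=O(1)$ is not needed for the Markov step. We would use it only to make sure the moments being bounded are finite and uniformly controlled. Under the inverse gamma prior, $\hat\theta_i$ is $X_i$ times a posterior mean of $1-\kappa_i$, and that mean lies in $[0,1]$, so $|\hat\theta_i|\le|X_i|$. Because $|\theta_{0i}|$ and $\sigma_{0i}^2$ are bounded, $\mathbb{E}X_i^2$ is uniformly bounded. This justifies interchanging the expectation with the supremum and gives integrability in the Markov step.

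The main obstacle is one of interpretation rather than technique. The threshold $M_nq_n\log(n/q_n)$ is stated per coordinate, while Theorem~\ref{thm:ig1} controls the summed risk. If one wants a sharper, genuinely coordinatewise statement, the plan would be to split the indices. For the $n-q_n$ zero means, one would use the tail bounds for $\mathbb{E}(1-\kappa_i\mid X_i)$ developed in Section~\ref{sec:proofs-inversegamma} to get $\mathbb{P}(|\hat\theta_i|^2>t)$ exponentially small. For the $q_n$ nonzero means, one would use boundedness of $\theta_{0i}$ and Gaussian tails. Summing over indices, the thresholds grow, so each piece vanishes. We expect the plain Markov argument to suffice.
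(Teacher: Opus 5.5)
Your argument is valid relative to Theorem~\ref{thm:ig1}, but it takes a different route from the paper. The paper never invokes Theorem~\ref{thm:ig1}. It applies Markov's inequality to $\mathbb{E}_{0i}\,\mathbb{E}[\sigma_i^2(1-\kappa_i)\mid X_i]$, which it identifies with $\mathbb{E}[(\hat\theta_i-\theta_i)^2\mid X_i]$ for a posterior draw $\theta_i$. So what it actually controls is the posterior spread about $\hat\theta_i$, matching the prose before Theorem~\ref{thm:exp2}, not the displayed frequentist error $(\hat\theta_i-\theta_{0i})^2$. It then splits the coordinates:
\begin{itemize}
\item for $\theta_{0i}=0$, Lemma~\ref{lem:ig1} gives terms of order $\tau_n^{2\eta}$, summing to $O(q_n)$;
\item for $\theta_{0i}\neq0$, it uses $\mathbb{E}_{0i}X_i^2=\theta_{0i}^2+\sigma_{0i}^2=O(1)$, which is exactly where $\max_i|\theta_{0i}|=O(1)$ enters.
\end{itemize}
Your route is shorter and addresses the statement exactly as displayed. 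The paper's route gets a non-null total of $O(q_n)$ rather than $O(q_n\log(n/q_n))$. Its final bound, of the form $\{C+o(1)\}/\{M_n\log(n/q_n)\}$, therefore vanishes even for bounded $M_n$, whereas yours genuinely needs $M_n\to\infty$.

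Be careful about treating the extra hypothesis as superfluous. Your reduction is only as strong as Theorem~\ref{thm:ig1} uniformly over all of $L_0(q_n)$. The paper's proof of that theorem handles $|X_i|>b_n$ through the Gaussian tail estimate \eqref{eq:t1-ig-10}, which is valid only when $\theta_{0i}$ stays bounded. For large signals, Lemma~\ref{lem:ig2} bounds $\mathbb{E}(\kappa_i I_{[\kappa_i>u]}\mid X_i)$ only by a multiple of $\tau_n^{-2a}|X_i|^{2a-2\alpha}$. That is not small until $|X_i|$ is polynomially large in $n/q_n$.

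The robust version of your argument uses the bounded-mean hypothesis where it actually does the work:
\begin{itemize}
\item For $\theta_{0i}\neq0$, your observation $|\hat\theta_i|\le|X_i|$ gives $\mathbb{E}_i(\hat\theta_i-\theta_{0i})^2\le 4\theta_{0i}^2+2\sigma_{0i}^2=O(1)$ directly.
\item For $\theta_{0i}=0$, bound $\mathbb{E}_{0i}\hat\theta_i^2\le\mathbb{E}_{0i}[X_i^2\,\mathbb{E}(1-\kappa_i\mid X_i)]$ via Lemma~\ref{lem:ig1}, as in \eqref{eq:t1-ig-1}--\eqref{eq:t1-ig-3}. This holds under the same integrability condition on $\sigma_{0i}^2$ that the paper imposes there.
\end{itemize}
Markov then yields the displayed statement without passing through the non-null part of Theorem~\ref{thm:ig1}.

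Your remarks about a union bound and about interchanging expectation and supremum are unnecessary. Markov bounds the displayed sum of coordinatewise probabilities directly, for each fixed parameter, before taking the supremum.
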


\section{Proofs: Exponential prior}\label{sec:proofs-exponential}

\noindent
The proofs of both Theorems~\ref{thm:exp1} and \ref{thm:exp2} rest on the following three lemmas.

\begin{lemma}\label{lem:exp1}
Under the prior given in $\textnormal{(i)-(iii)}$, for any $0<\tau<1$,
\[
\mathbb{E}(1-\kappa_i|X_i)\leq C\tau^{2\eta}\exp(|X_i|),
\]
where in the above and hereafter $C(>0)$ is a generic constant, and 
$0<\eta<\mbox{min}(a,1/2)$.\\
\end{lemma}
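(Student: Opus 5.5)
The plan is to integrate out $\sigma_i^2$ explicitly. That reduces the posterior of $\lambda_i^2$ to a closed form in which the dependence on $X_i$ enters only through a factor $\exp(-\sqrt{\kappa_i}|X_i|)$. This factor can be bounded above by $1$ and below by $\exp(-|X_i|)$, which leaves a ratio of prior integrals that does not depend on $X_i$ and is handled by the usual global--local argument.

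\emph{Step 1 (marginalising $\theta_i$ and $\sigma_i^2$).} Given $(\lambda_i^2,\sigma_i^2)$, the model (i)--(ii) gives $X_i\sim \mbox{N}(0,\sigma_i^2\kappa_i^{-1})$. With the exponential prior (iv), the joint posterior of $(\lambda_i^2,\sigma_i^2)$ is therefore proportional to
\[
(\sigma_i^2)^{-1/2}\kappa_i^{1/2}\exp\Bigl(-\frac{\kappa_iX_i^2}{2\sigma_i^2}-\frac{\sigma_i^2}{2}\Bigr)\Pi(\lambda_i^2).
\]
Next I would use the standard identity $\int_0^\infty s^{-1/2}\exp(-A/s-Bs)\,ds=\sqrt{\pi/B}\,\exp(-2\sqrt{AB})$, which is a Bessel $K_{1/2}$ integral. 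Taking $A=\kappa_iX_i^2/2$ and $B=1/2$ gives $2\sqrt{AB}=\sqrt{\kappa_i}|X_i|$, so the marginal posterior of $\lambda_i^2$ is proportional to $\kappa_i^{1/2}\exp(-\sqrt{\kappa_i}|X_i|)\Pi(\lambda_i^2)$. Hence
\[
\mathbb{E}(1-\kappa_i|X_i)=\frac{\int_0^\infty(1-\kappa_i)\kappa_i^{1/2}e^{-\sqrt{\kappa_i}|X_i|}\Pi(\lambda_i^2)\,d\lambda_i^2}{\int_0^\infty\kappa_i^{1/2}e^{-\sqrt{\kappa_i}|X_i|}\Pi(\lambda_i^2)\,d\lambda_i^2}.
\]
Verifying this closed form carefully, including the constants, is the main technical point. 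Once it holds, the rest is routine.

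\emph{Step 2 (removing $X_i$).} Since $0<\kappa_i\le 1$, we have $e^{-\sqrt{\kappa_i}|X_i|}\le 1$ in the numerator and $e^{-\sqrt{\kappa_i}|X_i|}\ge e^{-|X_i|}$ in the denominator. This gives
\[
\mathbb{E}(1-\kappa_i|X_i)\le e^{|X_i|}\,\frac{\int(1-\kappa_i)\kappa_i^{1/2}\Pi(\lambda_i^2)\,d\lambda_i^2}{\int\kappa_i^{1/2}\Pi(\lambda_i^2)\,d\lambda_i^2}.
\]

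\emph{Step 3 (numerator).} Use $\kappa_i^{1/2}\le1$ together with the elementary inequality $x/(1+x)\le x^{\eta}$, valid for $x\ge0$ and $0<\eta<1$, applied with $x=\lambda_i^2\tau^2$. The numerator is then at most $\tau^{2\eta}\int(\lambda_i^2)^{\eta}\Pi(\lambda_i^2)\,d\lambda_i^2$. Under (\ref{eq:1}), the integrand is bounded by $KM(\lambda_i^2)^{\eta-a-1}$ at infinity, which is integrable there because $\eta<a$. Near $0$, properness of the prior together with $(\lambda_i^2)^\eta\le1$ handles integrability. So the numerator is at most $C\tau^{2\eta}$.

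\emph{Step 4 (denominator).} For $0<\tau<1$ we have $\kappa_i^{1/2}=(1+\lambda_i^2\tau^2)^{-1/2}\ge(1+\lambda_i^2)^{-1/2}$. The denominator is therefore bounded below by the positive constant $\int(1+\lambda_i^2)^{-1/2}\Pi(\lambda_i^2)\,d\lambda_i^2$, which does not depend on $\tau$. The restriction $\eta<1/2$ is not needed in this argument; it is part of the statement presumably for later use, and it is compatible with everything above. Combining Steps 2--4 yields $\mathbb{E}(1-\kappa_i|X_i)\le C\tau^{2\eta}\exp(|X_i|)$.
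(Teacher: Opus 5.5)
Your proposal is correct and matches the paper's proof essentially step for step. The paper also integrates out $\sigma_i^2$, via the inverse Gaussian normalizing constant, which is equivalent to your $K_{1/2}$ identity, and obtains the posterior $\propto \kappa_i^{1/2}e^{-\sqrt{\kappa_i}|X_i|}\Pi(\lambda_i^2)$; it then bounds the exponential by $1$ in the numerator and by $e^{-|X_i|}$ in the denominator, and lower-bounds the denominator by $\int(1+\lambda^2)^{-1/2}\Pi(\lambda^2)\,d\lambda^2$. The only difference is cosmetic: for the numerator the paper splits the integral at $\lambda^2=1$ to get $K^{-1}+M\tau^{-2+2\eta}/(a-\eta)$ before multiplying by $\tau^2$, whereas your inequality $x/(1+x)\le x^{\eta}$ gives $\tau^{2\eta}\int(\lambda^2)^{\eta}\Pi(\lambda^2)\,d\lambda^2$ in one step. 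You are also right that $\eta<1/2$ plays no role in this lemma.
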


\begin{proof}
We begin with $X_i|\lambda_i^2,\sigma_i^2\sim\mbox{N}(0,\sigma_i^2
(1+\lambda_i^2\tau^2))$. This leads to
\[
\Pi(\lambda_i^2,\sigma_i^2|X_i)\propto(\sigma_i^2)^{-1/2}(1+\lambda_i^2\tau^2)
^{-1/2}\exp[-X_i^2/\{2\sigma_i^2(1+\lambda_i^2\tau^2)\}](\lambda_i^2)^{-a-1}
L(\lambda_i^2)\exp(-\sigma_i^2/2).
\]
Now integrating over $\sigma_i^2$, and using the transformation 
$z=1/\sigma_i^2$, one gets
\[
\Pi(\lambda_i^2|X_i)\propto(\lambda_i^2)^{-a-1}(1+\lambda_i^2\tau^2)^{-1/2}
L(\lambda_i^2)\int_{0}^{\infty}z^{-3/2}\exp[-(1/2z)\{z^2X_i^2(1+\lambda_i^2\tau^2)+1\}]dz.
\]
Next, recalling that an inverse Gaussian distribution with mean $\mu$ has pdf
\[
f(z)=(2\pi z^3)^{-1/2}\exp[-(z/\mu-1)^2/(2z)]I_{[z>0]},
\]
the above simplifies to
\begin{equation}
\Pi(\lambda_i^2|X_i)\propto(\lambda_i^2)^{-a-1}(1+\lambda_i^2\tau^2)^{-1/2}
L(\lambda_i^2)\exp[-|X_i|/(1+\lambda_i^2\tau^2)^{1/2}].
\label{eq:l1-exp-1}
\end{equation}
Hence, from equation~(\ref{eq:l1-exp-1}),
\begin{eqnarray}
\mathbb{E}(1-\kappa_i|X_i) & = & \mathbb{E}[\lambda_i^2\tau^2(1+\lambda_i^2\tau^2)^{-1}|X_i]
\nonumber\\ & = & \frac{\tau^2\int_{0}^{\infty}\lambda_i^2
(1+\lambda_i^2\tau^2)^{-3/2}\exp[-|X_i|/(1+\lambda_i^2\tau^2)^{1/2}
(\lambda_i^2)^{-a-1}L(\lambda_i^2)d\lambda_i^2}
{\int_{0}^{\infty}(1+\lambda_i^2\tau^2)^{-1/2}
\exp[-|X_i|/(1+\lambda_i^2\tau^2)^{1/2}](\lambda_i^2)^{-a-1}
L(\lambda_i^2)d\lambda_i^2}\nonumber\\ & \leq & \tau^2\exp(|X_i|)N/D,
\label{eq:l1-exp-2}
\end{eqnarray}
where $N=\int_{0}^{\infty}\lambda_i^2(1+\lambda_i^2\tau^2)^{-3/2}
(\lambda_i^2)^{-a-1}L(\lambda_i^2)d\lambda_i^2$ and 
$D=\int_{0}^{\infty}(1+\lambda_i^2\tau^2)^{-1/2}(\lambda_i^2)^{-a-1}
L(\lambda_i^2)d\lambda_i^2$.\\

\noindent
For $0<\tau^2<1$, 
\begin{equation}
D\geq \int_0^\infty (1+\lambda^2)^{-1/2}(\lambda^2)^{-a-1}L(\lambda^2)d\lambda^2.
\label{eq:l1-exp-3}
\end{equation}
The last integral is $\mathbb{E}[(1+\lambda^2)^{-1/2}]$ with respect to the proper
prior $\Pi(\lambda^2)$ introduced earlier. 
Next with $0<\eta<\mbox{min}(a,1/2)$,
\begin{eqnarray}
\lefteqn{ N=(\int_{0}^{1} +\int_{1}^{\infty})\lambda^2
(1+\lambda^2\tau^2)^{-3/2}(\lambda^2)^{-a-1}L(\lambda^2)d\lambda^2}\nonumber\\
 & \leq & [K^{-1}+M\int_{1}^{\infty}(\lambda^2\tau^2)^{-1+\eta}
(1+\lambda^2\tau^2)^{-1/2-\eta}(\lambda^2)^{-a}d\lambda^2\nonumber\\
 & \leq & 
K^{-1}+M\tau^{-2+2\eta}/(a-\eta).
\label{eq:l1-exp-4}
\end{eqnarray}
The lemma now follows from \eqref{eq:l1-exp-1}-\eqref{eq:l1-exp-4}.
\end{proof}

\begin{lemma}\label{lem:exp2}
For $0<\tau<1$, $\mathbb{E}(\kappa_iI_{[\kappa_i>u]}|X_i)\leq C
\tau^{-2a}\exp[-u^{1/2}(1-\delta^{1/2})|X_i|]$, for any $0<u<1$, $0<\delta<1$.
\end{lemma}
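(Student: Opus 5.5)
We bound $\mathbb{E}(\kappa_iI_{[\kappa_i>u]}\mid X_i)$ directly from the marginal posterior density of $\lambda_i^2$ in \eqref{eq:l1-exp-1}. We write it as a ratio of two integrals in $\kappa_i$ (or $t=\lambda_i^2$) and bound the numerator and denominator separately.

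\textbf{Numerator.} Since $\kappa_i=(1+\lambda_i^2\tau^2)^{-1}$, the exponential factor in \eqref{eq:l1-exp-1} is $\exp(-|X_i|\kappa_i^{1/2})$. On the event $\{\kappa_i>u\}$ it is at most $\exp(-u^{1/2}|X_i|)$. We also use $\kappa_i\le1$ and $(1+\lambda_i^2\tau^2)^{-1/2}\le 1$. The numerator is therefore at most
\[
\exp(-u^{1/2}|X_i|)\int_0^\infty (\lambda^2)^{-a-1}L(\lambda^2)\,d\lambda^2=K^{-1}\exp(-u^{1/2}|X_i|),
\]
using properness of $\Pi(\lambda_i^2)$.

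\textbf{Denominator.} We need a lower bound of the form $C\tau^{2a}\exp(-u^{1/2}\delta^{1/2}|X_i|)$, so that the ratio becomes $C\tau^{-2a}\exp[-u^{1/2}(1-\delta^{1/2})|X_i|]$. To get it, restrict the denominator integral to the set where $\kappa_i<u\delta$, i.e.\ $\lambda^2>(1/(u\delta)-1)/\tau^2=:c/\tau^2$. On this set $\exp(-|X_i|\kappa_i^{1/2})\ge\exp(-(u\delta)^{1/2}|X_i|)$, which has exactly the desired form. The denominator is then at least
\[
\exp(-(u\delta)^{1/2}|X_i|)\int_{c/\tau^2}^\infty (1+\lambda^2\tau^2)^{-1/2}(\lambda^2)^{-a-1}L(\lambda^2)\,d\lambda^2 .
\]
Substitute $t=\lambda^2\tau^2$. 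Because $L$ is nondecreasing, $L(t/\tau^2)\ge L(t)$ for $\tau<1$. The remaining integral becomes
\[
\tau^{2a}\int_c^\infty(1+t)^{-1/2}t^{-a-1}L(t)\,dt = C(u,\delta)\,\tau^{2a},
\]
and this constant is positive and finite because $L>0$ and $a>0$.

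\textbf{Combining.} Dividing the numerator bound by the denominator bound gives $C\tau^{-2a}\exp[-u^{1/2}(1-\delta^{1/2})|X_i|]$, as claimed.

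\textbf{Main obstacle.} The delicate step is the denominator lower bound: choosing the truncation so that the exponential factor carries $\delta^{1/2}$, and using monotonicity of $L$ to extract the factor $\tau^{2a}$ without losing it to the slowly varying function. The constant $C$ then depends on $u$ and $\delta$ through $c$. This is acceptable since $u$ and $\delta$ are fixed in the statement, although it becomes degenerate as $u\delta\to1$.
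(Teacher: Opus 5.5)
Your proof is correct and follows the paper's argument essentially step for step. It uses the same ratio of integrals from \eqref{eq:l1-exp-1}, the bound $\exp(-u^{1/2}|X_i|)$ with prior mass $K^{-1}$ in the numerator, and the restriction of the denominator to $\{\kappa_i<u\delta\}$ to extract $\tau^{2a}\exp(-(u\delta)^{1/2}|X_i|)$. The only difference is cosmetic: the paper truncates further to $[c/\tau^2,2c/\tau^2]$ and bounds $L$ below by the constant $L((1-u)/u)$, silently dropping the factor $(1+\lambda^2\tau^2)^{-1/2}$ (harmless, since it is at least $(1+2c)^{-1/2}$ there), whereas your substitution $t=\lambda^2\tau^2$ with $L(t/\tau^2)\ge L(t)$ keeps that factor and is slightly cleaner.
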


\begin{proof}
For $0<\kappa<1$, $0<u<1$ and $0<\delta<1$,
\begin{eqnarray}
\mathbb{E}(\kappa_iI_{[\kappa_i>u|X_i]} & = & \mathbb{E}[(1+\lambda_i^2\tau^2)^{-1/2}
I_{[\lambda_i^2\leq (1-u)/(u\tau^2)]}|X_i]\nonumber\\ & \leq &
\frac{\int_{0}^{(1-u)/(u\tau^2)}(1+\lambda^2\tau^2)^{-3/2}
\exp[-|X_i|(1+\lambda^2\tau^2)^{-1/2}](\lambda^2)^{-a-1}L(\lambda^2)d\lambda^2}
{\int_{(1-u\delta)/(u\delta\tau^2)}^{\infty} (1+\lambda^2\tau^2)^{-1/2}
\exp[-|X_i|(1+\lambda^2\tau^2)^{-1/2}](\lambda^2)^{-a-1}L(\lambda^2)d\lambda^2}
\nonumber\\ & \leq & \exp[-u^{1/2}(1-\delta^{1/2})|X_i|]
\frac{\int_{0}^{(1-u)/(u\tau^2)}(1+\lambda^2\tau^2)^{-3/2}
(\lambda^2)^{-a-1}L(\lambda^2)d\lambda^2}
{\int_{(1-u\delta)/(u\delta\tau^2)}^{\infty}(1+\lambda^2\tau^2)^{-1/2}
(\lambda^2)^{-a-1}L(\lambda^2)d\lambda^2}.\nonumber\\
\label{eq:l2-exp-1}
\end{eqnarray}
Next $\int_{0}^{(1-u)/(u\tau^2)}(1+\lambda_i^2\tau^2)^{-3/2}
(\lambda^2)^{-a-1}L(\lambda^2)d\lambda^2\leq\int_{0}^{\infty}
(\lambda^2)^{-a-1}L(\lambda^2)d\lambda^2=K^{-1}$. Also, since $L$ is 
monotonically nondecreasing and $0<u<1$ and $0<\delta<1$,
\begin{eqnarray*}
\int_{(1-u\delta)/(u\delta\tau^2)}^{\infty}(1+\lambda^2\tau^2)^{-1/2}
(\lambda^2)^{-a-1}L(\lambda^2)d\lambda^2 & \geq & L(1-u/u)
\int_{(1-u\delta)/(u\delta\tau^2)}^{2(1-u\delta)/(u\delta\tau^2)}
(\lambda^2)^{-a-1}d\lambda^2
\nonumber\\ & = & L(1-u/u)\tau^{2a}a^{-1}(1-u\delta/u\delta)^{-a}(1-2^{-a}).
\end{eqnarray*}
In view of the above, the result now follows from \eqref{eq:l2-exp-1}.
\end{proof}

\begin{lemma}\label{lem:exp3}
For $0<\tau<1$, $\mathbb{E}[\kappa_i I_{[\kappa_i\leq u]}|X_i]\leq C/X_i^2$.
\end{lemma}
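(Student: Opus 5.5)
The idea is to work directly with the marginal posterior of $\lambda_i^2$ in \eqref{eq:l1-exp-1}, with $\sigma_i^2$ already integrated out, and use the crude bound $\kappa_i\le 1$ together with the indicator $\kappa_i\le u$. Write $t=\lambda^2\tau^2$, so that $\kappa_i=(1+t)^{-1}$. Then
\[
\mathbb{E}[\kappa_i I_{[\kappa_i\le u]}\mid X_i]\le \mathbb{P}(\kappa_i\le u\mid X_i).
\]
Since $\kappa_i\le u<1$ forces $t$ to be large, we have $(1+t)^{-1/2}\le u^{1/2}$ on that event. We would rather exploit the factor $\kappa_i$ itself, so we keep it and bound
\[
\mathbb{E}[\kappa_i I_{[\kappa_i\le u]}\mid X_i]=\frac{\int_{t\ge(1-u)/u}(1+t)^{-3/2}e^{-|X_i|(1+t)^{-1/2}}(\lambda^2)^{-a-1}L(\lambda^2)\,d\lambda^2}{\int_0^\infty(1+t)^{-1/2}e^{-|X_i|(1+t)^{-1/2}}(\lambda^2)^{-a-1}L(\lambda^2)\,d\lambda^2}.
\]

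The key step is a substitution $s=|X_i|(1+t)^{-1/2}$, which lies in $(0,|X_i|)$. Under it, $(1+t)^{-1}=s^2/X_i^2$, which is exactly where the factor $1/X_i^2$ comes from. The plan is to show the numerator is bounded by $X_i^{-2}$ times an integral comparable to the denominator. Concretely, the ratio equals $\mathbb{E}[\kappa_i I\mid X_i]$. Pointwise, $\kappa_i=s^2/X_i^2$, so it suffices to show that the posterior expectation of $s^2=X_i^2\kappa_i$ is bounded by a constant. The posterior weight on the $\lambda^2$-scale carries the factor $e^{-s}$. Along the large-$t$ region, $s^2e^{-s}$ is bounded by $\sup_s s^2e^{-s}=4e^{-2}$, while in the denominator $e^{-s}\ge e^{-|X_i|}$ is too weak a bound. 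This is the obstacle: a naive comparison loses a factor $e^{|X_i|}$.

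To handle it, we would compare numerator and denominator over the same region. Split the denominator's range, and restrict it to the event $\{s\le 1\}$, i.e.\ $t\ge X_i^2-1$, where $e^{-s}\ge e^{-1}$. On this region we use the monotonicity and boundedness \eqref{eq:1} of $L$. Both the numerator region $\{s\ge 1,\ t\ge(1-u)/u\}$ and the denominator region $\{s\le1\}$ are then power-law integrals in $\lambda^2$ of the form $\int (\lambda^2)^{-a-1}(1+t)^{-1/2}\cdots$. These can be computed explicitly as in the proofs of Lemma~\ref{lem:exp1} and Lemma~\ref{lem:exp2}, with the factor $\tau^{2a}$ appearing in both and cancelling.

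The numerator piece on $\{s\le 1\}$ is trivially at most $X_i^{-2}$ times the corresponding part of the denominator, since $\kappa_i=s^2/X_i^2\le X_i^{-2}$ there. For the piece $\{1\le s\le |X_i|\}$, we bound it using $\kappa_i\le u$ and the power-law decay $(\lambda^2)^{-a-1}$. This reduces it to a constant (depending on $a$, $M$, $u$) times $X_i^{-2}$ times the denominator mass on $\{s\le 1\}$, which scales like $\tau^{2a}|X_i|^{-2a-1}$.

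I expect this middle-range comparison, which must be uniform in $\tau<1$ and $|X_i|$ large, to be the main difficulty. For bounded $|X_i|$ the claim is trivial, since the left side is at most $1$ and hence at most $C/X_i^2$ for $|X_i|\le C'$.
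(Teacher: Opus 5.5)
Your substitution $s=|X_i|(1+\lambda^2\tau^2)^{-1/2}=|X_i|\kappa_i^{1/2}$ is exactly the paper's variable $t=\kappa^{1/2}|X_i|$. Your diagnosis of the naive $e^{|X_i|}$ loss is right, and the numerator piece on $\{s\le 1\}$ is handled correctly. The gap is the middle range $\{1\le s\le u^{1/2}|X_i|\}$, which you defer as ``the main difficulty'': it is the entire content of the lemma, and the tools you name for it cannot close it. Those tools are $\kappa_i\le u$, $L\le M$ and the decay of $(\lambda^2)^{-a-1}$. Put $v=\lambda^2\tau^2$ (your $t$) and drop $e^{-s}$. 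The numerator over that range is then $\tau^{2a}\int_{(1-u)/u}^{X_i^2-1}(1+v)^{-3/2}v^{-a-1}L\,dv$, which is of order $\tau^{2a}$, with its mass sitting near $\kappa_i\approx u$. Your denominator piece on $\{s\le 1\}$ is of order $\tau^{2a}|X_i|^{-2a-1}$. So the ratio is of order $|X_i|^{2a+1}$ instead of $|X_i|^{-2}$. Using $\sup_s s^2e^{-s}=4e^{-2}$ is also too weak, since it leaves a ratio of order $|X_i|^{2a-1}$. Near $\kappa_i\approx u$ the weight is $e^{-s}=e^{-u^{1/2}|X_i|}$, exponentially small, and this factor has to be \emph{integrated} against the polynomial weight, not bounded pointwise.

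The missing step is to carry the change of variables through the prior factor as well:
\[
(1+\lambda^2\tau^2)^{-1/2}e^{-s}(\lambda^2)^{-a-1}L(\lambda^2)\,d\lambda^2
=2\tau^{2a}|X_i|^{-2a-1}s^{2a}\bigl(1-s^2/X_i^2\bigr)^{-a-1}L(\lambda^2)\,e^{-s}\,ds .
\]
So on $\{\kappa_i\le u\}=\{s\le u^{1/2}|X_i|\}$, the unnormalized posterior equals $2\tau^{2a}|X_i|^{-2a-1}s^{2a}e^{-s}$ times a factor lying in $[L((1-u)/u),\,M(1-u)^{-a-1}]$. The lower end uses monotonicity of $L$ and $\tau<1$. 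The $\tau^{2a}$ cancels, and with $\kappa_i=s^2/X_i^2$ your middle piece is at most $2M(1-u)^{-a-1}\Gamma(2a+3)\,\tau^{2a}|X_i|^{-2a-3}$. Against your denominator bound this gives $C/X_i^2$ for $|X_i|\ge u^{-1/2}$. Once this is done, the split at $s=1$ is unnecessary. The paper lower-bounds the denominator by its integral over $\{\kappa\le u\}$, the same region and the same weight $e^{-s}$ as the numerator. This reduces the bound to a constant times $X_i^{-2}\int_0^{c}s^{2a+2}e^{-s}\,ds\big/\int_0^{c}s^{2a}e^{-s}\,ds$ with $c=u^{1/2}|X_i|$. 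That ratio is at most $(2a+2)(2a+1)$ by two integrations by parts, uniformly in $\tau$ and $X_i$.
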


\begin{proof}
Since $L$ is nondecreasing in its argument and $0<\tau^2<1$,
\begin{eqnarray}
\mathbb{E}[\kappa_i I_{[\kappa_i\leq u]}|X_i] & \leq & 
\frac{\int_{0}^{u} (1-\kappa)^{-a}\kappa^{a+1/2}\exp(-\kappa^{1/2}|X_i|)
L((1-\kappa)/(\kappa\tau^2))d\kappa}
{\int_{0}^u\kappa^{a-1/2}\exp(-\kappa^{1/2}|X_i|)L((1-\kappa)/(\kappa\tau^2))
d\kappa}
\nonumber\\ & \leq & \frac{M}{L(1-u/u)}
\frac{\int_{0}^{u} (1-\kappa)^{-a}\kappa^{a+1/2}\exp(-\kappa^{1/2}|X|)d\kappa}
{\int_{0}^{u}\kappa^{a-1/2}\exp(-\kappa^{1/2}|X|)d\kappa} \nonumber\\ & \leq &
\frac{M(1-u)^{-a}}{L(1-u/u)}
\frac{\int_{0}^{u}\kappa^{a+1/2}\exp(-\kappa^{1/2}|X|)d\kappa}
{\int_{0}^{u}\kappa^{a-1/2}\exp(-\kappa^{1/2}|X|)d\kappa}.
\label{eq:l3-exp-1}
\end{eqnarray}
Now with the substitution $t=\kappa^{1/2}|X|$, ratio of the two integrals in 
the right hand side of equation~(\ref{eq:l3-exp-1}) simplifies to 
\begin{equation}
(1/X_i^2)\frac{\int_{0}^{u^{1/2}|X_i|}t^{2a+2}\exp(-t)dt}
{\int_{0}^{u^{1/2}|X_i|}t^{2a}\exp(-t)dt}.
\label{eq:l3-exp-2}
\end{equation}
Successive integaration by parts yields
\begin{eqnarray}
\int_{0}^{u^{1/2}|X_i|}t^{2a+2}\exp(-t)dt & \leq &
(2a+2)\int_{0}^{u^{1/2}|X_i|}t^{2a+1}\exp(-t)dt \nonumber\\ & \leq &
(2a+2)(2a+1)\int_{0}^{u^{1/2}|X_i|}t^{2a}\exp(-t)dt.
\label{eq:l3-exp-3}
\end{eqnarray}
The result now follows from \eqref{eq:l3-exp-1}-\eqref{eq:l3-exp-3}.
\end{proof}

\noindent
We now utilize the above lemmas to prove Theorems~\ref{thm:exp1} and \ref{thm:exp2}.\\

\noindent
\textbf{Proof of Theorem~\ref{thm:exp1}}. We consider separately the two cases $\{i:\theta_{0i}=0\}$
and $\{i:\theta_{0i}\neq 0\}$. First when $\theta_{0i}=0$, writing
$\mathbb{E}_{0i}$ to denote $\mathbb{E}_{0,\sigma_{0i}^2}$, one gets
\begin{equation}
\mathbb{E}_{0i}[\mathbb{E}(1-\kappa_i|X_i)X_i]^2\leq \mathbb{E}_{0i}
[X_i^2E(1-\kappa_i|X_i)]
\leq C\tau^{2\eta}\mathbb{E}_{0i}[X_i^2\exp(|X_i|)].
\label{eq:t1-exp-1}
\end{equation}
We then bound
\begin{eqnarray}
\mathbb{E}_{0i}[X_i^2\exp(|X_i|] & \leq &
\mathbb{E}_{0i}[X_i^2\exp(X_i)]+\mathbb{E}_{0i}[X_i^2\exp(-X_i)]
\nonumber\\ & = & 
(\sigma_{0i}^2+\sigma_{0i}^4)[\exp(\sigma_{0i}^2/2)+\exp(-\sigma_{0i}^2/2)].
\label{eq:t1-exp-2}
\end{eqnarray}
Since $0\leq\eta\leq\mbox{min}(1/2,a)$, $\tau_n^{2\eta}\leq\tau_n$. 
Thus with the
given choice of $\tau_n$, it follows from equations~(\ref{eq:t1-exp-1}) and (\ref{eq:t1-exp-2}) that 
\begin{equation}
\begin{aligned}
&\sum_{i:\theta_{0i}=0} \mathbb{E}_{0i}[\mathbb{E}(1-\kappa_i|X_i)X_i]^2\\
&\leq
C(q_n/n)^{1+\epsilon_n}\log(n/q_n)\sum_{i:\theta_{0i}=0}
(\sigma_{0i}^2+\sigma_{0i}^4)[\exp(\sigma_{0i}^2/2)+\exp(-\sigma_{0i}^2/2)]\\
\label{eq:t1-exp-3}
\end{aligned}
\end{equation}
In view of  the fact that $\mbox{max}_{1\leq i\leq n}\sigma_{0i}^2=O(1)$,
one now gets
\begin{equation}
\frac{\sum_{\{i:\theta_{0i}=0\}} \mathbb{E}_{0i}[\mathbb{E}(1-\kappa_i|X_i)X_i]^2)}
{q_n\log(n/q_n)}\leq C(n-q_n)\frac{(q_n/n)^{1+\epsilon_n}\log(n/q_n)}
{q_n\log(n/q_n)}\rightarrow 0 \; \mbox{as} \; n\rightarrow\infty.
\label{eq:t1-exp-4}
\end{equation}

\noindent
Next we consider the case when $\theta_{0i}\neq 0$. First writing $E_i$ for 
$E_{\theta_{0i},\sigma_{0i}^2}$, we use the inequality
\begin{eqnarray}
\mathbb{E}_i(\hat{\theta}_i-\theta_{0i})^2 & = &
\mathbb{E}_i[X_i-\theta_{0i}-\mathbb{E}(\kappa_i|X_i)X_i]^2
\nonumber\\ & \leq & \sigma_{0i}^2+\mathbb{E}_i
[\mathbb{E}(\kappa_i|X_i)X_i^2+2 \sigma_{0i}[\mathbb{E}(\kappa_i|X_i)X_i^2]^{1/2}].
\label{eq:t1-exp-5}
\end{eqnarray}
Next observe that $\mathbb{E}[(\kappa_i I_{[\kappa_i\leq u]}|X_i)X_i^2]\leq C$ from
Lemma~\ref{lem:exp3} so that $\frac{\sum_{\{\theta_{0i}\neq 0\}}\mathbb{E}_i[\mathbb{E}(\kappa_i|X_i)X_i^2]}
{q_n\log(n/q_n)}\leq (Cq_n)/(q_n\log(n/q_n))
\rightarrow 0$ as $n\rightarrow\infty$.\\

\noindent
It remains to consider  $\mathbb{E}_i[\hat{\theta}_i-\theta_{0i})^2 I_{[\kappa_i>u]}|X_i]$.
To this end, first writing $b_n=(1+\rho)4a/[u^{1/2}(1-\delta)^{1/2}]
[\log(n/q_n)]^{1/2}$, $\rho>0$, it follows from Lemma~\ref{lem:exp2} and the 
fact that for any fixed $s>0$,  $y^2\exp(-sy)$  
is decreasing in $y$ for $y>2/s$, for large $n$,
\begin{eqnarray}
\mathbb{E}_i[\mathbb{E}(\kappa_i|X_i)X_i^2 
I_{[\kappa_i>u]}I_{|X_i|>b_n}] &\leq&  C(\tau_n)^{-2a} b_n^{-2a(1+\rho)}
 \nonumber\\ & = &
C(q_n/n)^{\rho-\epsilon_n}[\log(n/q_n)]^2.
\label{eq:t1-exp-6}
\end{eqnarray}
This leads to the inequality 
$\frac{\mathbb{E}_i[\mathbb{E}(\kappa_i|X_i)X_i^2 
I_{[\kappa_i>u]}I_{|X_i|>b_n}]}{q_n\log(n/q_n)}\rightarrow 0$ as 
$n\rightarrow\infty$.
Now noting that $|X_i|\exp[-u^{1/2}(1-\delta^{1/2})|X_i|]\leq
[u^{1/2}(1-\delta^{1/2})]^{-1}$, one gets the inequality
\begin{equation}
\mathbb{E}_i[\mathbb{E}(\kappa_i|X_i)X_i^2 I_{[\kappa_i>u]}I_{|X_i|\leq b_n}]
\leq b_n^2\leq C\log(n/q_n)
\label{eq:t1-exp-7}
\end{equation}
It follows from equations~(\ref{eq:t1-exp-5})-(\ref{eq:t1-exp-7}) that
\begin{equation}
\mbox{lim sup}_{n\rightarrow\infty}
\frac{\sum_{\{i:\theta_{0i}\neq 0\}}\mathbb{E}_i[\mathbb{E}(\kappa_i|X_i)X_i]^2}
{q_n\log(n/q_n)}\leq C.
\label{eq:t1-exp-8}
\end{equation}
Finally, retracing the steps of equations~(\ref{eq:t1-exp-5})-(\ref{eq:t1-exp-7}), one can show that
\begin{equation}
\mbox{lim}_{n\rightarrow\infty}
\frac{\sum_{\{i:\theta_{0i}\neq 0\}}\mathbb{E}_i[\mathbb{E}(\kappa_i|X_i)X_i]^2]}
{q_n\log(n/q_n)}=0.
\label{eq:t1-exp-9}
\end{equation}
The theorem follows now from \eqref{eq:t1-exp-4}, \eqref{eq:t1-exp-8} and \eqref{eq:t1-exp-9}.\\

\noindent
\textbf{Proof of Theorem~\ref{thm:exp2}}. First observe that $\mathbb{E}[(\hat{\theta}_i-\theta_i)^2|X_i]=
\mathbb{E}[\sigma_{i}^2(1-\kappa_i)|X_i)$. Using the expression for the first
reciprocal moment of the inverse Gaussian distribution, 
$\mathbb{E}[\sigma_i^2|\kappa_i,X_i]=|X_i|(1+\lambda_i^2\tau^2)^{-1/2}+1)\leq|X_i|+1$, 
one gets
\[
\mathbb{E}_i[\sigma_{i}^2(1-\kappa_i)|X_i)]\leq \mathbb{E}_i(|X_i|+1)\leq
[(\theta_{0i}^2+\sigma_{0i}^2)^{1/2}+1]
\]
for $\theta_{0i}\neq 0$. Finally, for $\theta_{0i}=0$, following the
proof of Theorem~\ref{thm:exp1}, 
\begin{equation}
\mathbb{E}_i[\mathbb{E}(\sigma_{i}^2(1-\kappa_i)|X_i)]\leq\tau_n(\sigma_{0i}^2+\sigma_{0i}^4)
[\exp(\sigma_{0i}^2/2)+\exp(-\sigma_{0i}^2/2)].
\label{eq:t2-exp-1}
\end{equation}
Using the above results, the definition of $\tau_n$ and Markov's inequality,
$\mathbb{P}_{0i}[(\hat{\theta}_i-\theta_i)^2>q_n\log(n/q_n)]\leq 
C[\{q_n+(n-q_n)(q_n/n)^{1+\epsilon_n}\}/(q_n\log(n/q_n))\rightarrow 0$
as $n\rightarrow\infty$. This completes the proof of the theorem.\\

\section{Proofs: Inverse Gamma prior}\label{sec:proofs-inversegamma}

\noindent
Similar to the exponential prior case on error variance, the proofs of both theorems for the Inverse Gamma prior hinge crucially on three lemmas as well.\\

\begin{lemma}\label{lem:ig1}
Under the prior given in $\text{(i)-(iii)}$, for any $0<\tau<1$,
\[
\mathbb{E}(1-\kappa_i|X_i)\leq \tau^{2\eta}\exp(\alpha X_i^2/2\beta),
\]
where $0<\eta<\mbox{min}(a,1/2)$.\\
\end{lemma}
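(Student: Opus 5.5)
We mirror the proof of Lemma~\ref{lem:exp1}. The plan has three steps: compute the marginal posterior of $\lambda_i^2$ after integrating out $\sigma_i^2$; bound the resulting $X_i$-dependent factor uniformly in $\lambda_i^2$; and then reuse the bounds on $N$ and $D$ obtained in the exponential case.

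\textbf{Step 1: integrate out $\sigma_i^2$.} Given $\lambda_i^2,\sigma_i^2$, we have $X_i\sim\mbox{N}(0,\sigma_i^2(1+\lambda_i^2\tau^2))$. Hence the joint posterior is proportional to
\[
(\sigma_i^2)^{-1/2}(1+\lambda_i^2\tau^2)^{-1/2}\exp[-X_i^2/\{2\sigma_i^2(1+\lambda_i^2\tau^2)\}](\lambda_i^2)^{-a-1}L(\lambda_i^2)(\sigma_i^2)^{-\alpha-1}\exp(-\beta/\sigma_i^2).
\]
This integrand is an inverse gamma kernel in $\sigma_i^2$, so the integral can be done in closed form:
\[
\Pi(\lambda_i^2|X_i)\propto(\lambda_i^2)^{-a-1}L(\lambda_i^2)(1+\lambda_i^2\tau^2)^{-1/2}\Big[\beta+\frac{X_i^2}{2(1+\lambda_i^2\tau^2)}\Big]^{-(\alpha+1/2)}.
\]
Write $g(\lambda^2)=[1+X_i^2/\{2\beta(1+\lambda^2\tau^2)\}]^{-(\alpha+1/2)}$. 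Since $g$ is nondecreasing in $\lambda^2$, it takes values in $[g(0),1]$. Therefore
\[
\mathbb{E}(1-\kappa_i|X_i)\le \frac{\tau^2N}{g(0)\,D},
\]
where $N$ and $D$ are exactly the integrals defined in the proof of Lemma~\ref{lem:exp1}.

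\textbf{Step 2: convert $1/g(0)$ into the stated exponential.} We have
\[
g(0)^{-1}=(1+X_i^2/2\beta)^{\alpha+1/2}\le \exp\{(\alpha+1/2)X_i^2/2\beta\},
\]
using $1+x\le e^x$. This gives the exponent $(\alpha+1/2)X_i^2/2\beta$ rather than the stated $\alpha X_i^2/2\beta$. This discrepancy is the main obstacle.

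The first thing I would try is to check whether the prior in (iv) is meant with $(\sigma_i^2)^{-\alpha-1}$; the paper writes $(\sigma_i)^{-\alpha-1}$. With the literal density $(\sigma_i)^{-\alpha-1}=(\sigma_i^2)^{-(\alpha+1)/2}$, the power after integration becomes $(\alpha+1)/2 - 1 + 1/2=\alpha/2$. More precisely, the exponent of $1/\sigma^2$ in the integrand is $1/2+(\alpha+1)/2$, which yields the power $\alpha/2$ on $[\beta+X^2/\{2(1+\lambda^2\tau^2)\}]$. In that case $g(0)^{-1}=(1+X_i^2/2\beta)^{\alpha/2}\le\exp(\alpha X_i^2/4\beta)\le\exp(\alpha X_i^2/2\beta)$, matching the statement. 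I would adopt this reading. If instead the $\sigma_i^2$ parametrization is intended, I would state the bound with $\alpha+1/2$ and note that it is absorbed into the constants downstream.

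\textbf{Step 3: reuse the bounds on $N$ and $D$.} These bounds do not involve $\sigma_i^2$, so they carry over verbatim. From \eqref{eq:l1-exp-3}, $D$ is bounded below by the positive constant $\mathbb{E}[(1+\lambda^2)^{-1/2}]$. From \eqref{eq:l1-exp-4}, $N\le K^{-1}+M\tau^{-2+2\eta}/(a-\eta)$, using \eqref{eq:1} and $0<\eta<\min(a,1/2)$. Since $\tau<1$, this gives $\tau^2N/D\le C\tau^{2\eta}$. Combining with Step 2 yields
\[
\mathbb{E}(1-\kappa_i|X_i)\le C\tau^{2\eta}\exp(\alpha X_i^2/2\beta).
\]

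The constant $C$ is suppressed in the statement, as with the generic-constant convention of Lemma~\ref{lem:exp1}. It can be removed only if one tracks $K$, $M$ and the lower bound on $D$, so I would read the statement as holding up to such a constant.
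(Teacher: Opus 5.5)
Your route is the paper's: integrate out $\sigma_i^2$ to get the factor $[\beta+X_i^2/\{2(1+\lambda_i^2\tau^2)\}]^{-\alpha-1/2}$. You then bound it by $1$ in the numerator and by its value at $\lambda_i^2=0$ in the denominator, and reuse the bounds on $N$ and $D$ from Lemma~\ref{lem:exp1}. Your Step~1 handles this sandwich more carefully than the paper does. The paper's display writes the prefactor as $(1+X_i^2/2\beta)^{-\alpha-1/2}$, with the wrong sign in the exponent, and then jumps to $\exp(\alpha X_i^2/2\beta)$.

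The weak point is Step~2. The mismatch between $\alpha+1/2$ and $\alpha$ is not a real obstacle, and you do not need to change the model to remove it. The function $x\mapsto(1+x)^{\alpha+1/2}e^{-\alpha x}$ is bounded on $[0,\infty)$. Its maximum is at $x=1/(2\alpha)$ and equals $e^{-1/2}(1+1/(2\alpha))^{\alpha+1/2}\le e^{1/(4\alpha)}$. Hence $g(0)^{-1}\le e^{1/(4\alpha)}\exp(\alpha X_i^2/2\beta)$, and this factor goes into the generic $C$ you already carry. The same argument works with any $\epsilon>0$ in place of $\alpha$ in the exponent.

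You should therefore drop the reading of (iv) as $(\sigma_i)^{-\alpha-1}$. The normalizer $\beta^\alpha/\Gamma(\alpha)$ fits only $(\sigma_i^2)^{-\alpha-1}$, and the paper's own joint posterior carries $(\sigma_i^2)^{-\alpha-3/2}$. Lemma~\ref{lem:ig3} and the proof of Theorem~\ref{thm:ig2} also rely on the power $-\alpha-1/2$ and the matching posterior mean of $\sigma_i^2$. Your literal reading would prove the lemma for a different model from the one used in the rest of Section~\ref{sec:proofs-inversegamma}.

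Your fallback of keeping $\alpha+1/2$ and ``absorbing it downstream'' is not harmless either. In Theorem~\ref{thm:ig1}, finiteness of $\mathbb{E}_{0i}[X_i^2\exp(cX_i^2)]$ requires $\sigma_{0i}^2<1/(2c)$. So the coefficient in the exponent is not a mere constant: $c=(\alpha+1/2)/2\beta$ would tighten the variance condition from $\beta/\alpha$ to $\beta/(\alpha+1/2)$.
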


\begin{proof}
We begin with $X_i|\lambda_i^2,\sigma_i^2\sim\mbox{N}(0,\sigma_i^2
(1+\lambda_i^2\tau^2))$. This leads to
\[
\Pi(\lambda_i^2,\sigma_i^2|X_i) \propto
(\sigma_i^2)^{-\alpha - 3/2}(1+\lambda_i^2\tau^2)
^{-1/2} (\lambda_i^2)^{-a-1}
L(\lambda_i^2) \exp \left[- \frac{X_i^2 + 2\beta (1+\lambda_i^2\tau^2)}{2\sigma_i^2 (1+\lambda_i^2\tau^2)} \right].
\]
\noindent
The above leads to
\begin{equation}
\Pi(\lambda_i^2|X_i) \propto
(1+\lambda_i^2\tau^2)^{-1/2} (\lambda_i^2)^{-a-1} L(\lambda_i^2) \left[ \frac{X_i^2}{2(1+\lambda_i^2\tau^2)} + \beta \right]^{-\alpha - 1/2}
\label{eq:l1-ig-1}
\end{equation}
Hence,
\begin{eqnarray}
\mathbb{E}[1-\kappa_i|X_i]
&=&
\mathbb{E}[\lambda_i^2\tau^2(1+\lambda_i^2\tau^2)^{-1}|X_i] \nonumber \\
&=&
\frac{\tau^2 \int_0^\infty \lambda_i^2 (1+\lambda_i^2\tau^2)^{-3/2} (\lambda_i^2)^{-a-1} L(\lambda_i^2) [1 + X_i^2/\{ 2\beta(1+\lambda_i^2\tau^2) \}]^{-\alpha-1/2} d\lambda_i^2}{\int_0^\infty (1+\lambda_i^2\tau^2)^{-1/2} (\lambda_i^2)^{-a-1} L(\lambda_i^2) [1 + X_i^2/\{ 2\beta(1+\lambda_i^2\tau^2) \}]^{-\alpha-1/2} d\lambda_i^2} \nonumber \\
&\leq&
(1+X_i^2/2\beta)^{-\alpha-1/2} \tau^2 \frac{\int_0^\infty \lambda_i^2 (1+\lambda_i^2\tau^2)^{-3/2} (\lambda_i^2)^{-a-1} L(\lambda_i^2) d\lambda_i^2}{\int_0^\infty (1+\lambda_i^2\tau^2)^{-1/2} (\lambda_i^2)^{-a-1} L(\lambda_i^2) d\lambda_i^2} \nonumber \\
&\leq&
\tau^{2\eta} \exp(\alpha X_i^2/2\beta) N/D
\label{eq:l1-ig-2}
\end{eqnarray}

where $N=\int_{0}^{\infty}\lambda_i^2(1+\lambda_i^2\tau^2)^{-3/2}
(\lambda_i^2)^{-a-1}L(\lambda_i^2)d\lambda_i^2$ and 
$D=\int_{0}^{\infty}(1+\lambda_i^2\tau^2)^{1/2}(\lambda_i^2)^{-a-1}
L(\lambda_i^2)d\lambda_i^2$.\\

\noindent
Now repeating the steps of the proof of Lemma~\ref{lem:exp1}, for $0<\tau^2<1$, 
\begin{equation}
D\geq \int_0^\infty (1+\lambda_i^2)^{-1/2}(\lambda_i^2)^{-a-1}L(\lambda_i^2)d\lambda_i^2 = \mathbb{E}[(1+\lambda_i^2)^{-1/2}]
\label{eq:l1-ig-3}
\end{equation}
and with $0<\eta<\mbox{min}(a,1/2)$,
\begin{eqnarray}
N = K + M \tau^{-2+2\eta}/(a-\eta)
\label{eq:l1-ig-4}
\end{eqnarray}
\noindent
The lemma now follows from \eqref{eq:l1-ig-1}-\eqref{eq:l1-ig-4}.\\
\end{proof}

\begin{lemma}\label{lem:ig2}
For $0 < \tau < 1$, and for any $0<u<1$, $0<\delta<1$,
$$
\mathbb{E}[\kappa_i \mathbbm{1}_{\{ \kappa_i > u \}} | X_i] \leq C\tau^{-2a} |X_i|^{2a - 2\alpha - 1} (1+\sqrt{2}|X_i|)
$$
\end{lemma}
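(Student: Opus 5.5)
We follow the template of Lemma~\ref{lem:exp2}, starting from the marginal posterior \eqref{eq:l1-ig-1}. In terms of $s=1+\lambda_i^2\tau^2$ this posterior is proportional to
\[
s^{-1/2}(\lambda_i^2)^{-a-1}L(\lambda_i^2)\,\bigl[X_i^2/(2s)+\beta\bigr]^{-\alpha-1/2}.
\]
Since $\kappa_i=1/s$ and $\kappa_i>u$ is the event $\lambda_i^2<(1-u)/(u\tau^2)$, we write $\mathbb{E}[\kappa_i\mathbbm{1}_{\{\kappa_i>u\}}|X_i]$ as a ratio of integrals. The numerator runs over $\lambda^2\in(0,(1-u)/(u\tau^2))$ with integrand $s^{-3/2}(\lambda^2)^{-a-1}L(\lambda^2)[X_i^2/(2s)+\beta]^{-\alpha-1/2}$. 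The denominator is the full normalizing integral, which we lower bound by restricting to $\lambda^2\geq (1-u\delta)/(u\delta\tau^2)$. The plan is to bound the numerator by a power of $|X_i|$ and the denominator from below by $\tau^{2a}$ times another power of $|X_i|$.

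\textbf{Numerator.} We drop $L\le M$ and use the change of variables $\kappa=1/s$. The integrand then becomes, up to constants, $\tau^{2a}\kappa^{a+1/2}(1-\kappa)^{-a-1}[\kappa X_i^2/2+\beta]^{-\alpha-1/2}$ on $\kappa\in(u,1)$. Near $\kappa=1$ the factor $(1-\kappa)^{-a-1}$ is not integrable, so rather than mimic the crude bound used in Lemma~\ref{lem:exp2} we keep the factor $(\lambda^2)^{-a-1}L(\lambda^2)$ as a proper prior density, integrating to $K^{-1}$. On the range of the numerator $s\le 1/u$, so
\[
[X_i^2/(2s)+\beta]^{-\alpha-1/2}\le [uX_i^2/2+\beta]^{-\alpha-1/2}\le C|X_i|^{-2\alpha-1}.
\]
Hence the numerator is at most $C|X_i|^{-2\alpha-1}$, with an $O(1)$ alternative bound for small $|X_i|$.

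\textbf{Denominator.} On $\lambda^2\in[A,2A]$ with $A=(1-u\delta)/(u\delta\tau^2)$, monotonicity of $L$ gives $L(\lambda^2)\ge L((1-u)/u)$ (for $\tau<1$). We also have $s\asymp 1/(u\delta)$ there, and $[X_i^2/(2s)+\beta]^{-\alpha-1/2}\ge C(1+X_i^2)^{-\alpha-1/2}$. As in Lemma~\ref{lem:exp2}, $\int_A^{2A}(\lambda^2)^{-a-1}d\lambda^2=C\tau^{2a}$. This yields a denominator of order $\tau^{2a}(1+X_i^2)^{-\alpha-1/2}$, and the ratio is then $C\tau^{-2a}\bigl((1+X_i^2)/X_i^2\bigr)^{\alpha+1/2}$, which is too weak. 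To recover the factor $|X_i|^{2a}$ we will instead integrate the denominator over the $X_i$-dependent window $\lambda^2\in[X_i^2/\tau^2,2X_i^2/\tau^2]$. Here $s\asymp X_i^2$, so the bracket is of order $1$, $s^{-1/2}\asymp |X_i|^{-1}$, and the prior mass is $\asymp \tau^{2a}|X_i|^{-2a}$. The denominator is then $\gtrsim \tau^{2a}|X_i|^{-2a-1}$, giving the ratio $\tau^{-2a}|X_i|^{2a-2\alpha}$. The precise factor $(1+\sqrt2|X_i|)$ in the statement suggests the authors bound $s^{1/2}\le 1+\sqrt2|X_i|$ on such a window, and we will match the exponent $2a-2\alpha-1$ by tracking the $s^{-1/2}$ factor explicitly.

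\textbf{Main obstacle.} The main obstacle is the lower bound on the normalizing constant with the correct $|X_i|$ power: we need the window to lie in the region where $L$ is bounded below, and we need a separate treatment for small $|X_i|$, where the claimed bound is trivially large. We will handle $|X_i|\le 1$ by the trivial bound $\kappa_i\le 1$, since $C\tau^{-2a}|X_i|^{2a-2\alpha-1}\ge 1$ there when $\alpha>a$. The large-$|X_i|$ case is handled by the window argument above.
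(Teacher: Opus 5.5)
Your plan is correct and, once you switch to the $X_i$-dependent window $\lambda_i^2\in[X_i^2/\tau^2,\,2X_i^2/\tau^2]$, it is essentially the paper's proof. The paper bounds the numerator using $1+\lambda_i^2\tau^2\le 1/u$ and the integrability of the prior, and bounds the denominator on that same window using $(1+\lambda_i^2\tau^2)^{-1/2}\ge(1+2X_i^2)^{-1/2}$, a constant lower bound on the bracket, and prior mass $\asymp\tau^{2a}|X_i|^{-2a}$. Your separate treatment of $|X_i|\le 1$ via $\kappa_i\le 1$ (which needs $\alpha\ge a-1/2$, guaranteed by $\alpha>a$) is a real improvement: it keeps the window inside $[1,\infty)$, where $L\ge L(1)>0$. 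The paper instead uses a constant lower bound on $L$ over the window, which is not uniform as $X_i\to 0$, since properness of the prior forces $L(0^+)=0$.
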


\begin{proof}
For $0 < \tau < 1$, $0<u<1$ and $0<\delta<1$, $\kappa_i > u \iff \lambda_i^2 < \frac{1-u}{u\tau^2}$
\begin{eqnarray}
\mathbb{E}[\kappa_i \mathbbm{1}_{\{ \kappa_i > u \}} | X_i]
&=&
\mathbb{E}[(1+\lambda_i^2 \tau^2)^{-1/2} \mathbbm{1}_{\{ \lambda_i^2 < \frac{1-u}{u\tau^2} \}} | X_i] \nonumber \\
&=&
\frac{\int_0^{(1-u)/u\tau^2} (1+\lambda_i^2\tau^2)^{-3/2} \left(1+\frac{X_i^2}{2\beta(1+\lambda_i^2\tau^2)} \right)^{-\alpha-1/2} (\lambda_i^2)^{-a-1} L(\lambda_i^2) d\lambda_i^2}{\int_0^\infty (1+\lambda_i^2\tau^2)^{-1/2} \left(1+\frac{X_i^2}{2\beta(1+\lambda_i^2\tau^2)} \right)^{-\alpha-1/2} (\lambda_i^2)^{-a-1} L(\lambda_i^2) d\lambda_i^2} \nonumber \\
&=&
\frac{N}{D}
\label{eq:l2-ig-1}
\end{eqnarray}
where $N = \int_0^{(1-u)/u\tau^2} (1+\lambda_i^2\tau^2)^{-3/2} \left(1+\frac{X_i^2}{2\beta(1+\lambda_i^2\tau^2)} \right)^{-\alpha-1/2} (\lambda_i^2)^{-a-1} L(\lambda_i^2) d\lambda_i^2$ and $D = \int_0^\infty (1+\lambda_i^2\tau^2)^{-1/2} \left(1+\frac{X_i^2}{2\beta(1+\lambda_i^2\tau^2)} \right)^{-\alpha-1/2} (\lambda_i^2)^{-a-1} L(\lambda_i^2) d\lambda_i^2$. Note that,
\begin{eqnarray}
N &\leq&
M \left( 1+\frac{uX_i^2}{2\beta} \right)^{-\alpha-1/2} \int_0^{(1-u)/u\tau^2} (1+\lambda_i^2\tau^2)^{-3/2} (\lambda_i^2)^{-a-1} L(\lambda_i^2) d\lambda_i^2 \nonumber \\
&\leq&
M \left( 1+\frac{uX_i^2}{2\beta} \right)^{-\alpha-1/2} \int_0^\infty (\lambda_i^2)^{-a-1} L(\lambda_i^2) d\lambda_i^2 \nonumber \\
&=&
M K^{-1} \left( 1+\frac{uX_i^2}{2\beta} \right)^{-\alpha-1/2}
\label{eq:l2-ig-2}
\end{eqnarray}
Again,
\begin{eqnarray}
D &\geq&
\int_{X_i^2/\tau^2}^{2X_i^2/\tau^2} (1+\lambda_i^2\tau^2)^{-1/2} \left(1+\frac{X_i^2}{2\beta(1+\lambda_i^2\tau^2)} \right)^{-\alpha-1/2} (\lambda_i^2)^{-a-1} L(\lambda_i^2) d\lambda_i^2 \nonumber \\
&\geq&
C \left(1+\frac{X_i^2}{2\beta(1+X_i^2)} \right)^{-\alpha-1/2} \int_{X_i^2/\tau^2}^{2X_i^2/\tau^2} (1+\lambda_i^2\tau^2)^{-1/2} (\lambda_i^2)^{-a-1} d\lambda_i^2 \nonumber \\
&\geq&
C \left(1+\frac{X_i^2}{2\beta(1+X_i^2)} \right)^{-\alpha-1/2} (1+2X_i^2)^{-1/2} \int_{X_i^2/\tau^2}^{2X_i^2/\tau^2} (\lambda_i^2)^{-a-1} d\lambda_i^2 \nonumber \\
&=&
C \left(1+\frac{X_i^2}{2\beta(1+X_i^2)} \right)^{-\alpha-1/2} (1+2X_i^2)^{-1/2} \left(\frac{X_i^2}{\tau^2}\right)^{-a} (1-2^{-a}) a^{-1}
\label{eq:l2-ig-3}
\end{eqnarray}
Combining the upper and lower bounds,
\begin{eqnarray}
\mathbb{E}[\kappa_i \mathbbm{1}_{\{ \kappa_i > u \}} | X_i]
&\leq&
\frac{M K^{-1}}{C} \left( 1+\frac{uX_i^2}{2\beta} \right)^{-\alpha-1/2} \left(1+\frac{X_i^2}{C(1+X_i^2)} \right)^{\alpha+1/2} (1+2X_i^2)^{1/2} \left(\frac{X_i^2}{\tau^2}\right)^{a} \frac{a}{1-2^{-a}} \nonumber \\
&=&
C\tau^{-2a} \left( 1+\frac{uX_i^2}{2\beta} \right)^{-\alpha-1/2} (1+2X_i^2)^{1/2} |X_i|^{2a} \nonumber \\
&\leq&
C\tau^{-2a} |X_i|^{2a - 2\alpha - 1} (1+\sqrt{2}|X_i|)
\label{eq:l2-ig-4}
\end{eqnarray}
\noindent
The lemma now follows from \eqref{eq:l2-ig-4}
\end{proof}

\begin{lemma}\label{lem:ig3}
For $0 < \tau < 1$, $\mathbb{E}[\kappa_i \mathbbm{1}_{\{ \kappa_i \leq u \}} | X_i] \leq C/X_i^2$, where $a+1<\alpha<a+2$.
\end{lemma}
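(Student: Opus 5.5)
We will follow the template of Lemma~\ref{lem:exp3}, now using the marginal posterior of $\lambda_i^2$ from \eqref{eq:l1-ig-1}, rewritten in the variable $\kappa=(1+\lambda_i^2\tau^2)^{-1}$.

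\textbf{Step 1: change of variables.} Since $\lambda_i^2=(1-\kappa)/(\kappa\tau^2)$, we have $d\lambda_i^2=\tau^{-2}\kappa^{-2}d\kappa$, and the posterior density of $\kappa_i$ is proportional to
\[
\kappa^{a-1/2}(1-\kappa)^{-a-1}\,L\bigl((1-\kappa)/(\kappa\tau^2)\bigr)\,\bigl(1+\kappa X_i^2/(2\beta)\bigr)^{-\alpha-1/2}.
\]
The powers of $\tau$ cancel in numerator and denominator. Restricting both integrals to $(0,u)$ can only enlarge the ratio, since the denominator shrinks. On $(0,u)$ we have:
\begin{itemize}
\item $(1-\kappa)^{-a-1}\le(1-u)^{-a-1}$ in the numerator and $(1-\kappa)^{-a-1}\ge 1$ in the denominator;
\item $L\le M$ in the numerator;
\item $L\ge L((1-u)/u)$ in the denominator, because $L$ is nondecreasing, $(1-\kappa)/(\kappa\tau^2)\ge(1-u)/u$ for $\kappa\le u$, and $\tau<1$.
\end{itemize}
This reduces the problem to
\[
\mathbb{E}[\kappa_i I_{[\kappa_i\le u]}|X_i]\le C\,\frac{\int_0^u\kappa^{a+1/2}(1+\kappa X_i^2/2\beta)^{-\alpha-1/2}d\kappa}{\int_0^u\kappa^{a-1/2}(1+\kappa X_i^2/2\beta)^{-\alpha-1/2}d\kappa}.
\]

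\textbf{Step 2: rescale.} Substitute $t=\kappa X_i^2/(2\beta)$. The ratio becomes
\[
\frac{2\beta}{X_i^2}\cdot\frac{\int_0^{T}t^{a+1/2}(1+t)^{-\alpha-1/2}dt}{\int_0^{T}t^{a-1/2}(1+t)^{-\alpha-1/2}dt},\qquad T=uX_i^2/(2\beta).
\]
This parallels \eqref{eq:l3-exp-2}, with the polynomial weight $(1+t)^{-\alpha-1/2}$ playing the role of $e^{-t}$. It therefore suffices to show that the ratio $R(T)$ of the two integrals is bounded uniformly in $T>0$.

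\textbf{Step 3: uniform boundedness of $R(T)$; this is the main obstacle.} Here integration by parts as in \eqref{eq:l3-exp-3} is not available in the same clean form, and the hypothesis on $\alpha$ enters. I would argue as follows.
\begin{itemize}
\item For $T\le1$: the numerator is at most $T$ times the denominator, because $t^{a+1/2}\le T\,t^{a-1/2}$ on $(0,T)$. So $R\le 1$.
\item For $T>1$: the numerator is bounded by the full integral $\int_0^\infty t^{a+1/2}(1+t)^{-\alpha-1/2}dt=B(a+3/2,\alpha-a-1)$. This is finite exactly when $\alpha>a+1$, which is the lower constraint in the statement.
\item Also for $T>1$: the denominator is at least $\int_0^1 t^{a-1/2}(1+t)^{-\alpha-1/2}dt$, a positive constant. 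So $R$ is again bounded.
\end{itemize}
The upper constraint $\alpha<a+2$ is not needed here. I expect it to be used only later, when Lemma~\ref{lem:ig2} is applied, to make $|X_i|^{2a-2\alpha-1}(1+\sqrt2|X_i|)$ behave suitably. Combining the three steps gives $\mathbb{E}[\kappa_i I_{[\kappa_i\le u]}|X_i]\le C/X_i^2$, with $C$ depending on $a,\alpha,\beta,u,M,L$.
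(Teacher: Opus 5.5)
Your proof is correct, and its first two steps are the paper's. Like the paper, you pass to $\kappa$, bound $(1-\kappa)^{-a-1}$ and $L$ on $(0,u)$ using that $L$ is nondecreasing and $\tau<1$, shrink the denominator to $(0,u)$, and substitute $t=\kappa X_i^2/(2\beta)$.

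Where you differ is the uniform bound on the ratio $R(T)$. The paper treats $R(T)$ as the mean of the density proportional to $t^{a-1/2}(1+t)^{-\alpha-1/2}$ truncated to $[0,T]$. It then uses the monotone likelihood ratio of these truncations in $T$ to conclude $R(T)\le R(\infty)$, a ratio of Beta integrals. You split at $T=1$: the trivial bound $R(T)\le T$ for $T\le 1$, and for $T>1$ the full Beta integral in the numerator against $\int_0^1$ in the denominator. This is more elementary and avoids the stochastic-ordering step, at the cost of a cruder constant. The monotonicity argument gives the sharp value $\sup_T R(T)=B(a+3/2,\alpha-a-1)/B(a+1/2,\alpha-a)=(a+1/2)/(\alpha-a-1)$. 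Both routes need exactly $\alpha>a+1$.

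The Beta arguments displayed in the paper, $a-\alpha+3$ and $a-\alpha+2$, come from a sign slip in the exponent of $1/(1+t)$; the correct ones are those just given. This supports your remark that $\alpha<a+2$ plays no role in this lemma. However, it appears that this slip, not a later use of Lemma~\ref{lem:ig2} as you guess, is where the paper's upper constraint originates; the proof of Theorem~\ref{thm:ig1} invokes only $\alpha>a+1$.
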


\begin{proof}
Since $L$ is non decreasing, for $0<\kappa_i<u$, $(1-\kappa_i)/\kappa_i\tau^2$ is a decreasing function of $\kappa_i$. Now,
\begin{eqnarray}
\mathbb{E}[\kappa_i \mathbbm{1}_{\{ \kappa_i \leq u \}} | X_i]
&=&
\frac{\int_0^u (1-\kappa_i)^{-a-1} \kappa_i^{a+1/2} \left( 1+\frac{\kappa_iX_i^2}{2\beta} \right)^{-\alpha-1/2} L(\frac{1-\kappa_i}{\kappa_i\tau}) d\kappa_i}{\int_0^1 \kappa_i^{a-1/2} \left( 1+\frac{\kappa_iX_i^2}{2\beta} \right)^{-\alpha-1/2} L(\frac{1-\kappa_i}{\kappa_i\tau}) d\kappa_i} \nonumber \\
&\leq&
\frac{M(1-u)^{-a-1}}{C} \frac{\int_0^u \kappa_i^{a+1/2} \left( 1+\frac{\kappa_iX_i^2}{2\beta} \right)^{-\alpha-1/2} d\kappa_i}{\int_0^u \kappa_i^{a-1/2} \left( 1+\frac{\kappa_iX_i^2}{2\beta} \right)^{-\alpha-1/2} d\kappa_i} \nonumber \\
&=&
\frac{M(1-u)^{-a-1}}{C} \frac{\int_0^{uX_i^2/2\beta} (\frac{2\beta t}{X_i})^{a+1/2} (1+t)^{-\alpha-1/2} dt}{\int_0^{uX_i^2/2\beta} (\frac{2\beta t}{X_i})^{a-1/2} (1+t)^{-\alpha-1/2} dt} \nonumber \\
&=&
\frac{M(1-u)^{-a-1}}{C X_i^2} \frac{\int_0^{uX_i^2/2\beta} t^{a+1/2} (1+t)^{-\alpha-1/2} dt}{\int_0^{uX_i^2/2\beta} t^{a-1/2} (1+t)^{-\alpha-1/2} dt} \nonumber \\
&=&
\frac{M(1-u)^{-a-1}}{C X_i^2} A
\label{eq:l3-ig-1}
\end{eqnarray}
\noindent
where $A = \frac{\int_0^{uX_i^2/2\beta} t^{a+1/2} (1+t)^{-\alpha-1/2} dt}{\int_0^{uX_i^2/2\beta} t^{a-1/2} (1+t)^{-\alpha-1/2} dt}$. Consider the pdf $f_d(t) = \frac{t^{a-1/2} (1+t)^{-\alpha-1/2} \mathbbm{1}_{\{ t \leq d \}}}{\int_0^d t^{a-1/2} (1+t)^{-\alpha-1/2} dt}$.
\noindent
For $d_1<d_2$, $\frac{f_{d_2}(t)}{f_{d_1}(t)} = \frac{\int_0^{d_1} t^{a-1/2} (1+t)^{-\alpha-1/2} dt}{\int_0^{d_2} t^{a-1/2} (1+t)^{-\alpha-1/2} dt} \frac{\mathbbm{1}_{\{ t \leq d_2 \}}}{\mathbbm{1}_{\{ t \leq d_1 \}}}$. Note that $\frac{f_{d_2}(t)}{f_{d_1}(t)}$ is an increasing function of $t$. By the property of Monotone Likelihood Ratio,
\begin{equation}
A \leq \frac{\int_0^\infty t^{a+1/2} (1+t)^{-\alpha-1/2} dt}{\int_0^\infty t^{a-1/2} (1+t)^{-\alpha-1/2} dt} = \frac{\int_0^\infty (\frac{t}{1+t})^{a+1/2} (\frac{1}{1+t})^{a-\alpha+2} \frac{1}{(1+t)^2}dt}{\int_0^\infty (\frac{t}{1+t})^{a-1/2} (\frac{1}{1+t})^{a-\alpha+1} \frac{1}{(1+t)^2}dt} = \frac{B(a+3/2, a-\alpha+3)}{B(a+1/2, a-\alpha+2)}.
\label{eq:l3-ig-2}
\end{equation}
\noindent
The lemma now follows from \eqref{eq:l3-ig-1} and \eqref{eq:l3-ig-2}.
\end{proof}

\noindent
\textbf{Proof of Theorem~\ref{thm:ig1}}. Similar to the proof of Theorem 1 for the Laplace prior on $\sigma_i^2 (>0)$'s, we seperately consider two cases: $\{i:\theta_{0i}=0\}$ and $\{i:\theta_{0i}\neq 0\}$. First when $\theta_{0i}=0$, writing
$\mathbb{E}_{0i}$ to denote $\mathbb{E}_{0,\sigma_{0i}^2}$, one gets
\begin{equation}
\mathbb{E}_{0i}[\mathbb{E}(1-\kappa_i|X_i)X_i]^2 \leq \mathbb{E}_{0i}[X_i^2E(1-\kappa_i|X_i)]
\leq C\tau^{2\eta} \mathbb{E}_{0i}[X_i^2\exp(\frac{\alpha X_i^2}{2\beta})].
\label{eq:t1-ig-1}
\end{equation}
\noindent
Note that $\mathbb{E}_{0i}[X_i^2\exp(\alpha X_i^2/2\beta)]$ is finite if the signals remain within stable bounds, i.e. $l_1 \leq \inf_{1 \leq i \leq n} \sigma_{0i}^2 \leq \sup_{1 \leq i \leq n} \sigma_{0i}^2 \leq l_2$, where $l_2 \leq \beta/\alpha$ to ensure the Gaussian integral converges. Under this condition, we can bound the risk for null signals
\begin{equation}
\mathbb{E}[\hat\theta_{0i}^2] = \mathbb{E}_{0i}[E(1-\kappa_i|X_i)X_i]^2 \leq C\tau_n^{2\eta}
\label{eq:t1-ig-2}
\end{equation}
Thus, for the choice of $\tau_n = (q_n/n)^{\frac{1}{2\eta}(1+\epsilon_n)}$, it follows that
\begin{eqnarray}
\begin{aligned}
\frac{\sum\limits_{i: \theta_{0i}=0} \mathbb{E}_{0i}[E(1-\kappa_i|X_i)X_i]^2}{q_n \log (n/q_n)}
&\leq \frac{(n-q_n) C (q_n/n)^{(1+\epsilon_n)}}{q_n \log (n/q_n)} \\
&\leq \frac{(1-q_n/n) C (q_n/n)^{\epsilon_n}}{\log (n/q_n)}
\to 0
\label{eq:t1-ig-3}
\end{aligned}
\end{eqnarray}
\noindent
Next we consider the case when $\theta_{0i}\neq 0$. First writing $\mathbb{E}_i$ for $\mathbb{E}_{\theta_{0i},\sigma_{0i}^2}$, we use the inequality
\begin{eqnarray}
\mathbb{E}_i(\hat{\theta}_i-\theta_{0i})^2
&=&
\mathbb{E}_i[X_i-\theta_{0i} - \mathbb{E}(\kappa_i|X_i)X_i]^2 \nonumber \\
&\leq&
\sigma_{0i}^2 + \mathbb{E}_i[\mathbb{E}(\kappa_i|X_i)X_i^2] + 2\sigma_{0i}[\mathbb{E}_i[\mathbb{E}(\kappa_i|X_i)X_i^2]^{1/2}].
\label{eq:t1-ig-4}
\end{eqnarray}
\noindent
When $\kappa_i \leq u$, by Lemma~\ref{lem:ig3}, $\mathbb{E}[\kappa_i \mathbbm{1}_{\{ \kappa_i \leq u \}}|X_i] \leq C/X_i^2$, for some constant $C$ depending on $(\alpha, a)$. So, we have
\begin{equation}
\mathbb{E}[(\kappa_i \mathbbm{1}_{\{ \kappa_i \leq u \}}|X_i)X_i^2] \leq \frac{C}{X_i^2} \cdot X_i^2 = C
\label{eq:t1-ig-5}
\end{equation}
\noindent
Summing over the relevant cases,
\begin{equation}
\frac{\sum_{i: \theta_{0i} \neq 0} \mathbb{E}_i[\mathbb{E}[(\kappa_i \mathbbm{1}_{\{ \kappa_i \leq u \}}|X_i)X_i^2]]}{q_n \log (n/q_n)} \leq \frac{C q_n}{q_n \log (n/q_n)} \to 0 \text{ as } n \to \infty
\label{eq:t1-ig-6}
\end{equation}
When $\kappa_i > u$, by Lemma~\ref{lem:ig2}, $\mathbb{E}[\kappa_i \mathbbm{1}_{\{ \kappa_i > u \}}|X_i] \leq C\tau^{-2a} |X_i|^{2a-2\alpha-1} (1+\sqrt{2}|X_i|)$. We further split the $\kappa_i > u$ case into two sub-cases: $|X_i| \leq b_n$ and $|X_i|>b_n$, for some suitable $b_n$.\\

\noindent
When $|X_i| \leq b_n$, since $\kappa_i < 1$,
\begin{equation}
\mathbb{E}_i[\mathbb{E}(\kappa_i|X_i) X_i^2 \mathbbm{1}_{\{ \kappa_i > u \}} \mathbbm{1}_{\{ |X_i| \leq b_n \}}]
\leq \mathbb{E}_i[X_i^2 \mathbbm{1}_{\{ |X_i| \leq b_n \}}]
\leq \mathbb{E}_i[b_n^2 \mathbbm{1}_{\{ |X_i| \leq b_n \}}]
\leq b_n^2
\label{eq:t1-ig-7}
\end{equation}
Choosing $b_n^2 = M \log(n/q_n)$, where $M$ is chosen to be large enough such that $M/2\sigma_{0i}^2 > a/\eta$ for all $i$,
\begin{equation}
\frac{\sum_{i: \theta_{0i} \neq 0} \mathbb{E}_i[\mathbb{E}(\kappa_i|X_i) X_i^2 \mathbbm{1}_{\{ \kappa_i > u \}} \mathbbm{1}_{\{ |X_i| \leq b_n \}}]}{q_n \log (n/q_n)} \leq \frac{q_n b_n^2}{q_n \log (n/q_n)} = \frac{M \log (n/q_n)}{\log (n/q_n)} = M
\label{eq:t1-ig-8}
\end{equation}
\noindent
When $|X_i|>b_n$, using the result obtained in Lemma~\ref{lem:ig2}, under the condition $\alpha >a+1$, we can bound
\begin{eqnarray}
\mathbb{E}_i[\mathbb{E}(\kappa_i|X_i) X_i^2 \mathbbm{1}_{\{ \kappa_i > u \}} \mathbbm{1}_{\{ |X_i| > b_n \}}]
&\leq&
\mathbb{E}_i[X_i^2 C\tau_n^{-2a} |X_i|^{2a-2\alpha} \mathbbm{1}_{\{ |X_i| > b_n \}}] \nonumber \\
&=&
C\tau_n^{-2a} \mathbb{E}_i[|X_i|^{2a-2\alpha+2} \mathbbm{1}_{\{ |X_i| > b_n \}}] \nonumber \\
&=&
C\tau_n^{-2a} \int_{|x|>b_n} |x|^{2a-2\alpha+2} \frac{1}{(2\pi \sigma_{0i}^2)^{1/2}} \exp \left( -\frac{(x-\theta_{0i})^2}{2\sigma_{0i}^2} \right) dx \nonumber \\
\label{eq:t1-ig-9}
\end{eqnarray}
\noindent
The expectation term can be bounded by
\begin{equation}
\mathbb{E}_i[|X_i|^{2a-2\alpha+2} \mathbbm{1}_{\{ |X_i| > b_n \}}]
\leq C b_n^{2(a-\alpha+1)} \exp (-b_n^2/2 \sigma_{0i}^2)
\label{eq:t1-ig-10}
\end{equation}
\noindent
Finally, substituting the expressions for $b_n$ and $\tau_n$,
\begin{equation}
C \left( \frac{n}{q_n} \right)^{\frac{a}{\eta}(1+\epsilon_n)} \left[M \log (\frac{n}{q_n}) \right]^{a+\alpha -1} \left( \frac{q_n}{n} \right)^{\frac{M}{2 \sigma_{0i}^2}}
\to 0 \text{ as } n \to \infty
\label{eq:t1-ig-11}
\end{equation}
\noindent
The theorem then follows from \eqref{eq:t1-ig-3}, \eqref{eq:t1-ig-6} and \eqref{eq:t1-ig-11}.\\

\noindent
\textbf{Proof of Theorem~\ref{thm:ig2}}. First observe that $\mathbb{E}[(\hat{\theta}_i-\theta_i)^2|X_i]=
\mathbb{E}[\sigma_{i}^2(1-\kappa_i)|X_i]$. By Markov's inequality,
\begin{equation}
\sum_{i=1}^n \mathbb{P}_{0i}[(\hat\theta_i-\theta_i)^2 > M_n q_n \log(n/q_n)]
\leq \frac{\sum_{i=1}^n \mathbb{E}_{0i}[(\hat\theta_i-\theta_i)^2]}{M_n q_n \log(n/q_n)}
= \frac{\sum_{i=1}^n \mathbb{E}_{0i}[\mathbb{E}[\sigma_{i}^2(1-\kappa_i)|X_i]]}{M_n q_n \log(n/q_n)}
\label{eq:t2-ig-1}
\end{equation}
\noindent
We separately consider two cases: $\{i:\theta_{0i}=0\}$ and $\{i:\theta_{0i}\neq 0\}$. First when $\{i:\theta_{0i}\neq 0\}$, since $1-\kappa_i<1$, we have for some constant $C$,
\begin{equation}
\mathbb{E}_{0i}[\mathbb{E}[\sigma_{i}^2(1-\kappa_i)|X_i]]
\leq \frac{\beta}{\alpha-1/2} + \frac{\mathbb{E}_{0i}[X_i^2]}{2\alpha-1}
= \frac{\beta}{\alpha-1/2} + \frac{\theta_{0i}^2+\sigma_{0i}^2}{2\alpha-1}
\leq C
\label{eq:t2-ig-2}
\end{equation}
\noindent
When $\{\theta_{0i}=0\}$, using Lemma~\ref{lem:ig1},
\begin{eqnarray}
\mathbb{E}_{0i}[\mathbb{E}[\sigma_i^2(1-\kappa_i)|X_i]]
&\leq&
\frac{\beta}{\alpha-1/2} \mathbb{E}_{0i}[\mathbb{E}[1-\kappa_i|X_i]] + \frac{1}{2\alpha-1} \mathbb{E}_{0i}[X_i^2\mathbb{E}[1-\kappa_i|X_i]] \nonumber \\
&\leq&
\left( \frac{\beta}{\alpha-1/2} + \frac{1}{2\alpha-1} \right) C\tau_n^{2\eta} \mathbb{E}_{0i}[X_i^2 \exp(\alpha X_i^2/C_1)]
\label{eq:t2-ig-3}
\end{eqnarray}
\noindent
Note that $\mathbb{E}_{0i}[X_i^2 \exp(\alpha X_i^2/C)]$ can be bounded by a constant, say $K_1$, by our assumption $\mbox{max}_{1\leq i\leq n}\sigma_{0i}^2=O(1)$. Now,
\begin{equation}
\sum_{i=1}^n \mathbb{E}_{0i}[\mathbb{E}[\sigma_{i}^2(1-\kappa_i)|X_i]]
\leq
(n-q_n) C [(q_n/n)^{\frac{1}{2\eta}(1+\epsilon_n)}]^{2\eta}
=
C q_n (1 - q_n/n) (q_n/n)^{2\epsilon}
\label{eq:t2-ig-4}
\end{equation}
\noindent
Finally, summing over the relevant cases,
\begin{eqnarray}
\frac{\sum_{i:\theta_{0i}=0} \mathbb{E}_{0i}[\mathbb{E}[\sigma_{i}^2(1-\kappa_i)|X_i]]}{M_n q_n \log(n/q_n)}
&\leq&
\frac{C q_n + C q_n (1-q_n/n) (q_n/n)^{\epsilon_n}}{M_n q_n \log (n/q_n)} \nonumber \\
&=&  \frac{C + C (1-q_n/n) (q_n/n)^{\epsilon_n}}{M_n \log (n/q_n)} \nonumber \\
&\to& 0 \text{ as } n \to \infty
\label{eq:t2-ig-5}
\end{eqnarray}
As $n \to \infty$, since $q_n = o(n)$ and $M_n \to \infty$, the numerator remains bounded while the denominator $M_n \log(n/q_n) \to \infty$. Thus, the expression tends to $0$.\\
\noindent
The theorem then follows from \eqref{eq:t2-ig-1}, \eqref{eq:t2-ig-2} and \eqref{eq:t2-ig-5}.

\section{Final Remarks}\label{sec:remarks}

\noindent
The paper seems to be the first attempt to develop theoretical results for
global-local priors with unknown sample variance. 
There are many possible avenues for extending these results. 
A natural extension is to consider the regression framework and see possible extensions of the present 
results.\\

\backmatter

\bibliography{sn-bibliography}

\end{document}